\def\eref#1{(\ref{#1}%
%, {\tt {#1}}%
)}
\def\RSref#1{\ref{#1}%
% ({\tt {#1}}) %               % to be removed later
}
\def\RSlabel#1{\label{#1}%
% ({\tt {#1}})%                 % to be removed later
}
\def\RScite#1{\cite{#1}%
%, {\tt {#1}}%                 % to be removed later 
}
\newcommand{\bql}[1]{%
% \hfill {\tt ({#1})}%  to be removed later 
\begin{equation}\label{#1}%
}
\def\filename#1{}
\newcommand{\eq}{\end{equation}}
\def\fa{\hbox{ for all }}
\def\dfrac#1#2{\displaystyle{\frac{#1}{#2}   }}
\def\b1{\mathbf 1}
\newcommand{\R}{\ensuremath{\mathbb{R}}}
\newcommand{\N}{\ensuremath{\mathbb{N}}}
\def\calh{{\cal H}}
\def\biglf{\par\bigskip\noindent}
\newtheorem{definition}{Definition}
\newtheorem{lemma}{Lemma}
\newtheorem{theorem}{Theorem}
\newtheorem{corollary}{Corollary} 
\newtheorem{example}{Example}
\def\OOmega{\overline{\Omega}}
\begin{document}
\begin{center}
{\Large \bf Superconvergence of Kernel-Based Interpolation}
\vspace{0.5cm}

Robert Schaback% , possibly joined by N.N.

\vspace{0.5cm}

Draft of \today
\end{center}
%%%%%%%%%%%%%%%%%%%%%%%%%%%%%%%%%%
{\bf Abstract}: 
%****************************************************************
From spline theory it is well-known that univariate 
cubic spline
interpolation, if carried out in its natural Hilbert space $W_2^2[a,b]$
and on point sets with fill distance $h$, 
converges only like ${\cal O}(h^2)$ in $L_2[a,b]$ if no additional
assumptions are made. But {\em superconvergence} up to order $h^4$ 
occurs if more smoothness is assumed and if certain 
additional boundary conditions
are satisfied. This phenomenon was generalized in 1999
to multivariate interpolation 
in Reproducing Kernel Hilbert Spaces on domains $\Omega\subset\R^d$ 
for continuous positive definite Fourier-transformable shift-invariant
kernels on $\R^d$.  But the sufficient condition for
superconvergence given in 1999 still needs further analysis, because
the interplay between smoothness and boundary conditions is not clear at all.
Furthermore, if only additional smoothness is assumed, superconvergence
is numerically observed in the interior of the domain, but without
explanation, so far.  
This paper first  
generalizes the ``improved error bounds''  of 1999 by an
abstract theory that includes the Aubin-Nitsche trick and
the known superconvergence results for univariate polynomial splines.
Then the paper 
analyzes what is behind the sufficient conditions for superconvergence. 
They split into conditions on {\em smoothness} and 
{\em localization},
and these are investigated independently.  
If sufficient smoothness is present,
but no additional localization conditions are assumed, it is proven that
superconvergence always occurs in the interior of the domain.
If smoothness and localization interact in the kernel-based 
case on $\R^d$, weak and strong boundary conditions
in terms of pseudodifferential operators occur. 
A special section on Mercer expansions
is added, because Mercer eigenfunctions always satisfy 
the  sufficient conditions for superconvergence.
Numerical examples illustrate the theoretical findings.
%%%%%%%%%%%%%%%%%%%%%%%%%%%%%%%%%%%%%%%%%%%%%%%%%%%%%%%%
%****************************************************************
\section{Introduction}\RSlabel{SecIntro}
This paper investigates the superconvergence phenomenon in detail,
using the term ``superconvergence'' for a situation where
the approximating functions (approximants) have less smoothness
than the approximated function (the approximand), while the 
smoothness of the latter determines the error bound and the 
convergence rate. This is well-known from univariate spline theory 
\RScite{ahlberg-et-al:1967-1,nuernberger:1989-1,schumaker:2007-1} and
the Aubin-Nitsche trick in finite elements \RScite{braess:2001-1}.
Other notions of superconvergence, mainly in finite elements
\RScite{bramble-schatz:1977-1, thomee:1977-1, wahlbin:1995-1} 
% and Computer-Aided Design \red{\RScite{}}, 
refer to higher-order convergence in special
points like vertices of a refined triangulation. 
Superconvergence in the sense  of this paper occurs 
in the whole domain or in a subdomain. In contrast
to the ``escape'' situation of 
\RScite{narcowich-et-al:2006-1}, where smoothness of the approximands
is lower than the smoothness of the approximants,
we consider the case where
smoothness of the approximands is higher. In \RScite{narcowich-et-al:2006-1},
the convergence rate is like the one 
for the kernel of the larger space with less smoothness, while here
the convergence rate is equal to the rate obtainable using
the smoother kernel of a smaller space.  
\biglf
The paper starts with a unified abstract presentation of 
the standard cases of superconvergence, including finite elements,
splines, sequence spaces, and kernel-based interpolation on domains in $\R^d$. 
%Another case is added, namely
%general kernels defined via series expansions.
The sufficient criterion
for superconvergence in the abstract situation splits 
into two conditions in Section \RSref{SecLoc} as soon as {\em localization} 
comes into play. In Section \RSref{SecFTS}, the paper 
specializes to kernel-based function spaces on bounded domains in $\R^d$,
linking localization to weak and strong solutions of homogeneous
pseudodifferential equations outside the domain. In the Sobolev case
$W_2^m(\R^d)$ treated in Section \eref{SecSob}, the operators  
are classical, namely $(Id-\Delta)^m$, and hidden boundary conditions 
come finally into play, namely when a general function $f$ 
on $\Omega$ with extended smoothness $W_2^{2m}(\Omega)$ is considered.
Superconvergence then requires that $f$ has an 
extension to $\R^d$ by solutions
of $(Id-\Delta)^m=0$ with $W_2^{2m}(\R^d)$ smoothness, and this 
imposes the condition $(Id-\Delta)^m=0$ in
the $W_2^{2m}(\R^d)$ sense on the boundary. Then Section \RSref{SecIntSupConv}
applies the previous results to show that superconvergence always
occurs in the interior of the domain, if the approximants have 
sufficient smoothness. 
\biglf
Because Mercer expansions of continuous kernels
yield local eigenfunctions satisfying the criteria for superconvergence,
Section \RSref{SecMExt} links the previous localization 
and extension results to Mercer expansions. In particular, the 
Hilbert space closure of the extended Mercer eigenfunctions
coincides with the closure of all possible interpolants with nodes in the
domain.  
Numerical examples in Section \RSref{SecExa}
illustrate the theoretical results,
in particular demonstrating the superconvergence in the 
interior of the domain. 
%****************************************************************
%****************************************************************
\section{Abstract Approach}\RSlabel{SecAbAp}
The basic argument behind superconvergence in the sense of this paper
has a very simple abstract form that works for univariate splines, 
finite elements, and kernel-based methods. To align it with
what follows later, we use a somewhat special notation.
\biglf
The starting point
is a Hilbert space $\calh_K$ with inner product $(.,.)_K$
and a linear best approximation problem in the norm of $\calh_K$ 
that can be described by a projector
${\Pi_K}$ from  $\calh_K$ onto a closed subspace ${\Pi_K}(\calh_K)$. 
The standard error analysis of such a process 
uses a weaker norm $\|.\|_0$ that we assume to 
arise from a Hilbert space $\calh_0$ with continuous embedding
$E^K_0\;:\;\calh_K\to\calh_0$. It takes the form
\bql{eqstanderrbnd}
\|E^K_0(f-{\Pi_K} f)\|_0\leq \epsilon
\|f-{\Pi_K} f\|_K \fa f\in\calh_K
\eq
and usually describes standard convergence results when the projectors vary.
\begin{theorem}\RSlabel{theGenSupConv}
Superconvergence occurs in the subspace
$\calh_{K*K,0}:=(E_0^K)^*(\calh_0)$ of $\calh_K$ 
and turns a standard error bound \eref{eqstanderrbnd} into
$$
\|E^K_0(f-{\Pi_K} f)\|_0\leq \epsilon^2
\|((E_0^K)^*)^{-1}f\|_0 \fa f\in\calh_{K*K,0}.
$$
\end{theorem}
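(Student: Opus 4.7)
The plan is to run the classical Aubin--Nitsche duality argument in this abstract formalism, layered on top of two invocations of the standard bound~\eref{eqstanderrbnd}. For $f\in \calh_{K*K,0}$ I will write $f=(E_0^K)^*\phi$ with $\phi=((E_0^K)^*)^{-1}f\in \calh_0$; this presupposes injectivity of $(E_0^K)^*$, i.e.\ density of the range of $E_0^K$ in $\calh_0$, which is a natural standing hypothesis in the settings considered later. The whole theorem will follow once I establish the sharpened $\calh_K$-bound
$$
\|f-\Pi_K f\|_K \;\leq\; \epsilon\,\|\phi\|_0,
$$
because one further application of~\eref{eqstanderrbnd} then produces the claimed $\epsilon^2$ estimate.

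To obtain the sharpened bound I first exploit the orthogonality built into the projector: since $\Pi_K$ is the orthogonal projector onto its closed range, $(f-\Pi_K f,\Pi_K f)_K=0$, and therefore $\|f-\Pi_K f\|_K^2=(f-\Pi_K f,f)_K$. Substituting $f=(E_0^K)^*\phi$ and moving $\phi$ across the adjoint gives
$$
\|f-\Pi_K f\|_K^2 \;=\; (f-\Pi_K f,(E_0^K)^*\phi)_K \;=\; (E_0^K(f-\Pi_K f),\phi)_0.
$$
Cauchy--Schwarz in $\calh_0$ combined with the standard error bound~\eref{eqstanderrbnd} now yields
$$
\|f-\Pi_K f\|_K^2 \;\leq\; \|E_0^K(f-\Pi_K f)\|_0\,\|\phi\|_0 \;\leq\; \epsilon\,\|f-\Pi_K f\|_K\,\|\phi\|_0,
$$
and dividing by $\|f-\Pi_K f\|_K$ delivers the sharpened $\calh_K$-bound. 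One more use of~\eref{eqstanderrbnd} multiplies by an additional factor $\epsilon$ and concludes the proof.

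I do not anticipate any substantive obstacle: the mechanism is precisely the Aubin--Nitsche trick, rephrased intrinsically through the adjoint of the embedding $E_0^K$, and it reduces to two applications of the hypothesized standard bound with one duality step between them. The only delicate point worth flagging is the notation $((E_0^K)^*)^{-1}$, which is only meaningful as a genuine inverse once $(E_0^K)^*$ is injective on $\calh_{K*K,0}$; this is the reason I record density of $E_0^K(\calh_K)$ in $\calh_0$ as an implicit standing hypothesis, and it may be cleaner to state it explicitly in the theorem rather than hide it in the definition of $\calh_{K*K,0}$.
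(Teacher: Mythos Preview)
Your argument is correct and is essentially identical to the paper's proof: both write $f=(E_0^K)^*v_f$, use orthogonality of $\Pi_K$ to reduce $\|f-\Pi_K f\|_K^2$ to $(v_f,E_0^K(f-\Pi_K f))_0$, apply Cauchy--Schwarz and the standard bound once to get $\|f-\Pi_K f\|_K\le\epsilon\|v_f\|_0$, and then invoke the standard bound a second time. Your remark on the injectivity of $(E_0^K)^*$ (equivalently, density of $E_0^K(\calh_K)$ in $\calh_0$) is a valid caveat that the paper addresses only later, in the Fourier-transform setting.
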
 
\begin{proof}\RSlabel{ProtheGenSupConv}
If $f=(E_0^K)^*(v_f)$ with $v_f\in\calh_0$, then
\bql{eqswapLL}
(f,g)_K
=
((E_0^K)^*(v_f),g)_K
=
(v_f,E_0^Kg)_K \fa g\in \calh_K,\;f\in \calh_{K*K,0}
\eq
and we get via orthogonality
$$
\begin{array}{rcl}
\|f-{\Pi_K} f\|^2_K
&=&
(f, f-{\Pi_K} f)_K\\
&=&
((E_0^K)^*(v_f), f-{\Pi_K} f)_K\\
&=&
(v_f, E_0^K(f-{\Pi_K} f))_0\\
&\leq &
\|v_f\|_0\|E_0^K(f-{\Pi_K} f)\|_0\\
&\leq &
\epsilon \|v_f\|_0
\|f-{\Pi_K} f\|_K\\
\end{array}
$$
leads to the assertion.
\end{proof} 
\begin{example}\RSlabel{exaFEM}
The Aubin-Nitsche trick in finite elements takes
the spaces $\calh_K = H_0^1(\Omega)\subset \calh_0=L_2(\Omega)$
and uses the fact that piecewise linear finite elements are 
best approximations in  $H_0^1(\Omega)$. 
The standard ${\cal O}(h)$
convergence rate in $H_0^1(\Omega)$ leads to superconvergence 
of order $h^2$ in $\calh_{K*K,0}=H^2(\Omega)\cap H_0^1(\Omega)$, 
though the approximants
do not lie in that space.  The condition \eref{eqswapLL} is
$$
\begin{array}{rcl}
(f,g)_{K}
&=&
(\nabla f, \nabla g)_{L_2(\Omega)}\\
&=&
(-\Delta f, E^K_0 g)_0\\
&=&
(v_f, E^K_0 g)_0 \fa g\in \calh_K=H_0^1(\Omega),
\end{array}
$$
but note that vanishing boundary values are important here. 
\end{example} 
\begin{example}\RSlabel{exaUniSpl}
In basic univariate spline theory \RScite{ahlberg-et-al:1967-1,%
nuernberger:1989-1,schumaker:2007-1}
for splines of order $2n$ or degree $2n-1$, the spaces 
are $\calh_0=L_2[a,b]$ and $\calh_K=W_2^n[a,b]$,
but a seminorm is used there. The projector
is interpolation on finite point sets, and it has the orthogonality property
because it is minimizing the proper seminorm. Then the 
abstract condition \eref{eqswapLL} is treated like
$$
\begin{array}{rcl}
(f,g)_{K}
&=&
(D^n f, D^ng)_{L_2(\Omega)}\\
&=&
((-1)^nD^{2n}f, E^K_0g)_0\\
&=&
(v_f, E^K_0g)_0 \fa g\in \calh_K,
\end{array}
$$
but note that it requires 
certain boundary conditions to be satisfied that we do not 
consider in detail here.
\end{example} 
These two examples show that \eref{eqswapLL} may contain hidden boundary
conditions, but these are not directly connected to superconvergence.  
They concern the transition from the second to the third formula  in 
\eref{eqswapLL}.  But we shall see now that \eref{eqswapLL}
may hold without boundary conditions:
\begin{example}\RSlabel{exaSeqSpe}
For kernels with series expansions like Mercer kernels,
the basic theory boils down to sequence spaces starting from
$\calh_0=\ell_2(\N)$. For arbitrary positive sequences
$\kappa:=\{\kappa_n\}_n$ with $\lim_{n\to\infty} \kappa_n=0$, the
Hilbert space $\calh_K$ is defined via sequences $f=\{f_n\}_n,\;
g=\{g_n\}_n$ 
with
$$
(f,g)_K:=\displaystyle{\sum_n \frac{f_ng_n}{\kappa_n}   } 
$$  
to contain all $f$ with $\|f\|_K<\infty$. Projectors
$\Pi_K\;:\;\calh_K\to\calh_K $ 
should be norm-minimizing, e.g. as projectors on subspaces.
Then \eref{eqswapLL} is
$$
(f,g)_K
= \displaystyle{\sum_n \frac{f_n}{\kappa_n}g_n   }
=(f./\kappa,g)_0=(v_f,g)_0 
$$ 
in MATLAB notation, and we see that $H_{K*K,0}$ is the space 
generated by the sequence $\kappa.*\kappa$ in MATLAB notation. 
There is no localization like \eref{eqlocaliz}, and 
there cannot be any hidden ``boundary conditions''.
It is easy to apply this to analytic cases with series expansions,
e.g. into orthogonal polynomials or spherical harmonics. 
\end{example}  
This example explains our seemingly strange notation 
in the abstract setting,
but the most important case is still to follow:
\begin{example}\RSlabel{exaKerCase}
For dealing with the multivariate kernel-based case
in \RScite{schaback:1999-1}, we take a (strictly) positive definite 
translation-invariant, continuous, and  
Fourier-transformable kernel $K$ on $\R^d$ 
to define $\calh_K$ as the {\em native} Hilbert space in which it is
reproducing. For a bounded domain $\Omega$ with an interior cone condition,
we use $\calh_0=L_2(\Omega)$ and have a continuous embedding.
Sampling inequalities \RScite{rieger:2008-1,rieger-et-al:2010-1} 
yield standard error bounds
\eref{eqstanderrbnd}. The 
abstract condition \eref{eqswapLL} is now treated via
$$
\begin{array}{rcl}
(f,g)_{K}
&=&
\displaystyle{\int_{\R^d}\frac{\hat f \overline{\hat g}}{\hat K}   }\\
&=&
\displaystyle{\int_{\R^d}\frac{\hat f }{\hat K}\overline{\hat g}   }\\
&=&
\left((\frac{\hat f }{\hat K})^\vee , E^K_0g  \right)_{L_2(\R^d)}\\
&=&
(v_f, E^K_0g)_{L_2(\Omega)} \fa g\in \calh_K,
\end{array}
$$
if we assume
\bql{eqconvol}
f=K*v_f \hbox{ with } v_f\in L_2(\R^d)
\eq 
and
\bql{eqlocaliz}
 v_f\in L_2(\R^d) \hbox{ supported in } \Omega.  
\eq 
The space of functions with the
{\em convolution condition} \eref{eqconvol} is $\calh_{K*K}$
where the convolved kernel $K*K$ is reproducing, and the additional 
{\em localization} condition \eref{eqlocaliz}  defines
a subspace $\calh_{K*K,0}$ that we shall study in more detail 
in the rest of the paper. Since Fourier transform tools
require global spaces like  $L_2(\R^d)$ or $W_2^m(\R^d)$ while
error bounds only work on local spaces like $L_2(\Omega)$ or $W_2^m(\Omega)$,
we have to deal with {\em localization}, and in particular 
we must be very careful with maps that restrict or extend functions
between these spaces.  
\end{example} 
We first handle localization by a small add-on to the abstract theory. 
In contrast to the setting above, we use spaces $\calh_0$ and $\calh_K$ 
that do not need localization, i.e. they stand for 
$L_2(\R^d)$ or $W_2^m(\R^d)$. Then we add an abstract {\em localized} space   
$\calh_\Omega$ standing for $L_2(\Omega)$ 
with
additional maps  $E_\Omega^0\;:\;\calh_0\to \calh_\Omega$
and vice versa, modelling restriction to $\Omega$ and extension by zero.
Throughout, we shall use a ``cancellation'' 
notation for embeddings, allowing e.g. $E_A^BE_B^C=E_A^C$.
These maps should have the properties
\bql{eqaddproploc}
\begin{array}{rcl}
(E_0^\Omega f, E^\Omega_0 g)_0&=&(f, g)_\Omega \fa f,g\in \calh_\Omega,\\
(f, E^\Omega_0g)_0&=& (E_\Omega^0f,g)_\Omega \fa  f\in \calh_0,\;g\in\calh_\Omega.
\end{array}
\eq
To generalize the splitting of the abstract condition
\eref{eqswapLL} into the {\em convolution} condition
\eref{eqconvol} and the {\em localization} condition
\eref{eqlocaliz}, we postulate  
\bql{eqconvolgen}
(f,g)_K=(v_f,E_0^Kg)_0 \fa f\in \calh_{K*K}:=(E_0^K)^*(\calh_0)
\eq
without localization, and then define $\calh_{K*K,\Omega}$ as the
subspace of $\calh_{K*K}$ of all $f\in \calh_{K*K}$ with
\bql{eqlocalizgen}
v_f=E_0^\Omega E_\Omega^0 v_f,
\eq
caring for localization. 
\begin{theorem}\RSlabel{theSupConvLoc}
Besides \eref{eqaddproploc}, \eref{eqconvolgen}, and \eref{eqlocalizgen},
assume a partially 
localized error bound of the form
\bql{eqlocstabnd}
\|E^0_\Omega E^K_0(f-{\Pi_K} f)\|_\Omega
\leq \epsilon \|f-{\Pi_K} f\|_K \fa f\in\calh_K
\eq
describing a standard convergence behavior,
where the constant $\epsilon$ now also depends on $\Omega$. 
Then for all $f\in \calh_{K*K,\Omega}$ we have superconvergence in the sense  
$$
\|E^0_\Omega E^K_0(f-{\Pi_K} f)\|_\Omega
\leq \epsilon^2\|v_f\|_0.
$$
\end{theorem}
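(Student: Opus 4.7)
The plan is to run the same chain of identities used in the proof of Theorem~\RSref{theGenSupConv}, but to insert the restriction map $E_\Omega^0$ at the decisive step so that the localized bound \eref{eqlocstabnd} can replace the global one. The role played by $(E_0^K)^*v_f$ in the abstract theorem is taken over here by $v_f\in\calh_0$ satisfying the pair of conditions \eref{eqconvolgen} and \eref{eqlocalizgen}, and the job of the proof is to push the localization \eref{eqlocalizgen} through the reproducing duality so that the right-hand side of the Cauchy--Schwarz estimate involves $\|E_\Omega^0 E_0^K(f-\Pi_K f)\|_\Omega$ rather than $\|E_0^K(f-\Pi_K f)\|_0$.

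First I would write $\|f-{\Pi_K} f\|_K^2=(f,f-{\Pi_K} f)_K$, which uses the orthogonality of the norm-minimizing projector $\Pi_K$ against its own range. Since $f\in\calh_{K*K,\Omega}\subset\calh_{K*K}$, the generalized convolution identity \eref{eqconvolgen} applied with $g=f-\Pi_K f\in\calh_K$ turns this into $(v_f,E_0^K(f-\Pi_K f))_0$. Using the localization \eref{eqlocalizgen} I would then replace $v_f$ by $E_0^\Omega E_\Omega^0 v_f$ and invoke the second identity of \eref{eqaddproploc} with $f\leftarrow E_0^K(f-\Pi_K f)\in\calh_0$ and $g\leftarrow E_\Omega^0 v_f\in\calh_\Omega$ to pull the restriction across the inner product, giving
$$
\|f-\Pi_K f\|_K^2=(E_\Omega^0 v_f,E_\Omega^0 E_0^K(f-\Pi_K f))_\Omega.
$$
Cauchy--Schwarz in $\calh_\Omega$ bounds this by $\|E_\Omega^0 v_f\|_\Omega\cdot\|E_\Omega^0 E_0^K(f-\Pi_K f)\|_\Omega$, and the first identity of \eref{eqaddproploc}, combined once more with \eref{eqlocalizgen}, gives $\|E_\Omega^0 v_f\|_\Omega=\|E_0^\Omega E_\Omega^0 v_f\|_0=\|v_f\|_0$.

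To finish, I would apply the hypothesized localized error bound \eref{eqlocstabnd} to the surviving factor, obtaining $\|f-\Pi_K f\|_K^2\leq\epsilon\,\|v_f\|_0\,\|f-\Pi_K f\|_K$. Dividing by $\|f-\Pi_K f\|_K$ yields the sharpened energy bound $\|f-\Pi_K f\|_K\leq\epsilon\,\|v_f\|_0$, and feeding this back into \eref{eqlocstabnd} one more time produces the asserted superconvergence estimate $\|E_\Omega^0 E_0^K(f-\Pi_K f)\|_\Omega\leq\epsilon^2\|v_f\|_0$, exactly in analogy with the final step of Theorem~\RSref{theGenSupConv}.

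The main obstacle is bookkeeping rather than analysis: one must keep straight which of the two identities in \eref{eqaddproploc} is being used at each point, whether the extension $E_0^\Omega$ or the restriction $E_\Omega^0$ appears, and that \eref{eqlocalizgen} is precisely the condition which lets $v_f$ pass through the extension-then-restriction composition without loss. A minor subtlety worth remarking on is that \eref{eqconvolgen} was stated for $f\in\calh_{K*K}$ and arbitrary $g\in\calh_K$; here it is applied to the residual $g=f-\Pi_K f\in\calh_K$, which is legitimate but should be noted explicitly when writing the formal proof.
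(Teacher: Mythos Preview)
Your proof is correct and follows essentially the same route as the paper's: orthogonality of $\Pi_K$ to write $\|f-\Pi_K f\|_K^2$ as $(v_f,E_0^K(f-\Pi_K f))_0$, then \eref{eqlocalizgen} and \eref{eqaddproploc} to localize the inner product to $\calh_\Omega$, Cauchy--Schwarz, and the identification $\|E_\Omega^0 v_f\|_\Omega=\|v_f\|_0$. The only cosmetic difference is the order of cancellation: the paper squares \eref{eqlocstabnd} first and then divides by $\|E_\Omega^0 E_0^K(f-\Pi_K f)\|_\Omega$, whereas you first derive the intermediate energy bound $\|f-\Pi_K f\|_K\le\epsilon\|v_f\|_0$ and then reapply \eref{eqlocstabnd}; the ingredients and logic are the same.
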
 
\begin{proof}\RSlabel{ProtheSupConvLoc}
We change the start of the basic argument to 
$$ % \bql{eqsupconvbasic}
\begin{array}{rcl}
\|E^0_\Omega E^K_0(f-{\Pi_K} f)\|^2_\Omega
&\leq &
\epsilon^2\|f-{\Pi_K} f\|^2_K\\
&=&
\epsilon^2(v_f,E^K_0(f-{\Pi_K} f))_0
\end{array}
$$ % \eq
and then have to introduce a localization in the right-hand side as well.
This works by the additional assumptions \eref{eqconvolgen}
and \eref{eqlocalizgen} and yields
$$
\begin{array}{rcl}
\|E^0_\Omega E^K_0(f-{\Pi_K} f)\|^2_\Omega
&\leq &
\epsilon^2
(E^K_0(f-{\Pi_K} f), E_0^\Omega E_\Omega^0 v_f)_0\\
&=&
\epsilon^2
(E_\Omega ^0 E^K_0(f-{\Pi_K} f), E_\Omega^0 v_f)_\Omega\\
&\leq &
\epsilon^2
\|E_\Omega ^0 E^K_0(f-{\Pi_K} f)\|_\Omega \|E_\Omega^0 v_f\|_\Omega\\
&=&
\epsilon^2
\|E_\Omega ^0 E^K_0(f-{\Pi_K} f)\|_\Omega \|v_f\|_0.\\
\end{array}
$$ 
\end{proof}   
Summarizing, we see that the abstract condition \eref{eqswapLL} 
contains localization and boundary conditions in the first two examples,
while the third is completely 
without these conditions, and the fourth contains localization, but
no boundary condition.
This strange fact needs clarification.
Another observation in the kernel-based multivariate case 
of Example \RSref{exaKerCase}  is that
additional smoothness in the sense of \eref{eqconvolgen} leads to
superconvergence in the interior of the domain, even in cases where
\eref{eqlocalizgen} does not hold.  
We shall focus on these items from now on.
%%%%%%%%%%%%%%%%%%%%%%%%%%%%%%%
%****************************************************************
\section{Localization}\RSlabel{SecLoc}
We now come back to the second part of the abstract
theory in Section \RSref{SecAbAp} and have a closer look
at {\em localization}. The {\em localized} space $\calh_\Omega$
still is separated from the ``global'' spaces $\calh_K$ and $\calh_0$,
but we now push the localization into subspaces of   $\calh_K$.
To this end, consider the orthogonal closed subspaces 
\bql{eqZKO}
Z_K(\Omega)= \ker E_\Omega^0E_0^K \hbox{ and }
\calh_K(\Omega):=Z_K(\Omega)^\perp = (E_\Omega^0E_0^K)^*(\calh_\Omega)
\eq
of $\calh_K$. 
The second space consists of all ``functions'' $f$ 
in $\calh_K$ that are completely determined by  their ``values
on $\Omega$'', i.e. by  $E_\Omega^0E_0^Kf$. 
This is the space
users work in  when they take spans of linear combinations
of kernel translates $K(\cdot,x)$ with $x\in\Omega$. The 
orthogonal complement of the $\calh_K$-closure then consists
of all functions in $\calh_K$ that vanish on $\Omega$, i.e. it is
$Z_K(\Omega)$ in the above decomposition. 
\biglf
To make this more explicit, 
recall the native space construction
for continuous (strictly) positive definite kernels
on $\R^d$ starting from arbitrary 
finite sets $X=\{x_1,\ldots,x_N\}\subset\R^d$ and 
weight vectors $a\in\R^N$. These are used to define the generators 
\bql{eqNSgen}
\mu_{X,a}(f):=\displaystyle{\sum_{j=1}^Na_jf(x_j)   },\;
f_{X,a}(x):=\displaystyle{\sum_{j=1}^Na_jK(x_j,x)   } 
\eq
for the native space construction, and they are connected by the Riesz map.
One defines inner products on the generators via kernel matrices and then
goes to the Hilbert space closure to get $\calh_K$.  
\biglf
If the sets are restricted to a domain $\Omega$, the same process applies
and yields a closed subspace 
$\calh(K,\Omega)$ of $\calh_K$ that we might call
a {\em localization} of $\calh_K$. 
It is that subspace in which
standard kernel-based methods work, using point sets that always lie in
$\Omega$.
% , and we shall re-encounter it in later parts of the paper. 
\begin{lemma}\RSlabel{lemStaLocOrth}
The subspace $\calh(K,\Omega)$ of $\calh_K$ defined above 
coincides with the space $\calh_K(\Omega)$ defined
abstractly above.
The isometric embedding  $\calh_K(\Omega)\to \calh_K$ 
maps each function in $\calh_K(\Omega)$ to the unique
$\calh_K$-norm-minimal extension to $\R^d$.  
\end{lemma}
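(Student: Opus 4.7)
The plan is to establish both claims by identifying $\calh(K,\Omega)$ and $\calh_K(\Omega)$ as the $\calh_K$-orthogonal complement of the same closed subspace $Z_K(\Omega)$, and then to extract the norm-minimal extension property from the resulting orthogonal decomposition.

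First I would compute the $\calh_K$-orthogonal complement of $\{K(\cdot,x):x\in\Omega\}$. By the reproducing property, a function $f\in\calh_K$ is orthogonal to $K(\cdot,x)$ precisely when $f(x)=0$. Because $K$ is continuous and strictly positive definite and $\Omega$ is bounded, every element of $\calh_K$ is continuous and bounded on $\OOmega$, so vanishing in $L_2(\Omega)$ and pointwise vanishing on the open set $\Omega$ coincide; that is, $E_\Omega^0 E_0^K f = 0$ iff $f\equiv 0$ on $\Omega$. This identifies $\{K(\cdot,x):x\in\Omega\}^\perp$ with $Z_K(\Omega)$. Taking orthogonal complements once more, and observing that the native-space inner product computed on the generators \eref{eqNSgen} with $x_j\in\Omega$ agrees with the restriction of $(.,.)_K$ (so the abstract closure $\calh(K,\Omega)$ embeds isometrically as $\overline{\spn\{K(\cdot,x):x\in\Omega\}}$ in $\calh_K$), I obtain
$$
\calh(K,\Omega) \;=\; \overline{\spn\{K(\cdot,x):x\in\Omega\}}^{\,\calh_K} \;=\; Z_K(\Omega)^\perp \;=\; \calh_K(\Omega),
$$
which is the first claim.

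For the norm-minimal extension property, let $f\in\calh_K(\Omega)$ and let $g\in\calh_K$ be any competitor with the same $\Omega$-restriction $E_\Omega^0 E_0^K g = E_\Omega^0 E_0^K f$. Then $g-f\in Z_K(\Omega) = \calh_K(\Omega)^\perp$, so Pythagoras yields $\|g\|_K^2 = \|f\|_K^2 + \|g-f\|_K^2$ with equality iff $g=f$. Hence $f$ is the unique $\calh_K$-norm-minimal extension to $\R^d$ of its restriction to $\Omega$, and the inclusion $\calh_K(\Omega)\hookrightarrow\calh_K$ realizes this extension.

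The main obstacle is the clean identification of the abstract map $E_\Omega^0 E_0^K$ with pointwise restriction on $\Omega$: this is what connects the $L_2$-based definition of $Z_K(\Omega)$ with the pointwise computation $(f,K(\cdot,x))_K=f(x)$ needed for the orthogonality step. It rests on continuity of native-space functions and on boundedness of $\Omega$, both available under the hypotheses. Once this identification is secured, everything else reduces to standard orthogonal decomposition in a reproducing kernel Hilbert space.
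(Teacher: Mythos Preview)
Your proof is correct and follows essentially the same route as the paper: both identify $\calh(K,\Omega)$ with $Z_K(\Omega)^\perp$ via the reproduction property $(f,K(\cdot,x))_K=f(x)$, and both derive the norm-minimal extension from the orthogonal decomposition $\calh_K=\calh_K(\Omega)\oplus Z_K(\Omega)$. You add the explicit continuity argument linking pointwise vanishing on $\Omega$ to vanishing in $L_2(\Omega)$, and you spell out Pythagoras for the minimality claim; the paper leaves both of these implicit in a two-sentence proof, so your version is a slightly more detailed rendering of the same argument.
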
   
\begin{proof}\RSlabel{ProlemStaLocOrth}
The reproduction property $\mu_{X,a}(f)=(f,f_{X,a})_{K}$ immediately yields
the first statement, because the spanned space is the orthogonal
complement of $Z_K(\Omega)$ of \eref{eqZKO}.
The second follows from the variational fact that
any norm-minimal extension must be $\calh_K$-orthogonal to all
functions in $\calh_K$ that vanish on $\Omega$.
\end{proof} 
Before we go further, we could say that a function $f\in\calh_K$
can be {\em localized} to $\Omega$, if it lies in $\calh_K(\Omega)$.
And, we could define the $K$-{\em carrier} of 
$f\in\calh_K$ as the smallest
domain that $f$ can be localized to, i.e. the closed set $\Omega_f$ such that
$\calh_K(\Omega_f)$ is the intersection of all $\calh_K(\Omega)$ 
such that $f$ can be
localized to $\Omega$. It is an interesting problem to find the carrier
of functions in $\calh_K$, and we shall come back to it.
\biglf
After this detour explaining $\calh_K(\Omega)$, 
we assume that the range of the projector $\Pi_K$ 
is in $\calh_K(\Omega)$ and thus orthogonal to $Z_K(\Omega)$.
The standard approach to working with $\R^d$-kernels
on domains $\Omega$ starts with $\calh_\Omega$ right away and does not care
for $\calh_K=\calh_{\R^d}$.  These spaces are norm-equivalent, but not the same.
They are connected by extension and restriction maps like above.
\begin{lemma}\RSlabel{lemNoSupCon}
If $f\in\calh_K$ is not in $\calh_K(\Omega)$, the superconvergence argument
fails already in \eref{eqlocstabnd},
because  there is a positive constant $\delta$ depending on $f,\;K,$ 
and $\Omega$, but not on $\Pi_K$, such that
$$
\|f-\Pi_K f\|_K\geq \delta.
$$
\end{lemma}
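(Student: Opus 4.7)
The plan is to exploit the orthogonal splitting $\calh_K=Z_K(\Omega)\oplus \calh_K(\Omega)$ recorded in \eref{eqZKO}, together with the standing assumption just stated before the lemma that the range of $\Pi_K$ lies in $\calh_K(\Omega)$. Both pieces are already in place, so the argument reduces to Pythagoras.

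First I would write any $f\in\calh_K$ as $f=f_0+f_1$ with $f_0\in Z_K(\Omega)$ and $f_1\in\calh_K(\Omega)$, using that $Z_K(\Omega)$ is closed. The hypothesis $f\notin\calh_K(\Omega)$ is equivalent to $f_0\neq 0$, so $\|f_0\|_K>0$, and this quantity depends only on $f,K,\Omega$ (the decomposition itself is determined entirely by $\calh_K$ and the subspace $Z_K(\Omega)$, not by $\Pi_K$).

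Next I would observe that $\Pi_K f\in \calh_K(\Omega)$ by assumption, so $f_1-\Pi_K f\in \calh_K(\Omega)$ as well, while $f_0\in Z_K(\Omega)=\calh_K(\Omega)^\perp$. Thus
\[
f-\Pi_K f \;=\; f_0 \;+\; (f_1-\Pi_K f)
\]
is an orthogonal decomposition in $\calh_K$, and Pythagoras gives
\[
\|f-\Pi_K f\|_K^2 \;=\; \|f_0\|_K^2 + \|f_1-\Pi_K f\|_K^2 \;\geq\; \|f_0\|_K^2.
\]
Setting $\delta:=\|f_0\|_K>0$ yields the claimed bound, and since $f_0$ is independent of $\Pi_K$, so is $\delta$.

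Finally I would add a short remark explaining \emph{why} this makes the superconvergence argument fail already at step \eref{eqlocstabnd}: the left-hand side $\|E_\Omega^0 E_0^K(f-\Pi_K f)\|_\Omega$ only sees the restriction to $\Omega$ and can still tend to zero under refinement (because $E_\Omega^0 E_0^K f_0=0$), whereas $\|f-\Pi_K f\|_K$ stays bounded below by the fixed constant $\delta$; hence no inequality of the form \eref{eqlocstabnd} with $\epsilon\to 0$ can be exploited to produce the $\epsilon^2$ rate of Theorem \RSref{theSupConvLoc}. There is no real obstacle in the proof itself; the only subtlety is matching the abstract space $\calh_K(\Omega)$ of \eref{eqZKO} with the concrete native-space localization $\calh(K,\Omega)$, which is exactly what Lemma \RSref{lemStaLocOrth} delivers.
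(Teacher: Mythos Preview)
Your argument is correct and is essentially the paper's own proof spelled out in detail: the paper simply remarks that $\|f-\Pi_Kf\|_K$ is bounded below by the distance from $f$ to the closed subspace $\calh_K(\Omega)$, which is exactly your $\|f_0\|_K$. Your additional explanatory remark about why \eref{eqlocstabnd} then fails is a welcome clarification, though the paper leaves it implicit.
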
  
\begin{proof}\RSlabel{ProlemNoSupCon}
This is clear because the left-hand side can never be smaller than the
norm of the best approximation to $f$ from 
the closed subspace $\calh_K(\Omega)$.
\end{proof} 
Note that the above argument does not need extended smoothness. 
But with extended smoothness, we get
\begin{lemma}\RSlabel{lemneccond}
The sufficient 
conditions \eref{eqconvolgen} and \eref{eqlocalizgen} 
for superconvergence imply $f\in\calh_K(\Omega)$. 
\end{lemma}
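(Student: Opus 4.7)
The plan is to show directly that $f\in Z_K(\Omega)^\perp$ by computing $(f,g)_K$ for an arbitrary $g\in Z_K(\Omega)=\ker E^0_\Omega E^K_0$ and using the two hypotheses in sequence. Since $\calh_K(\Omega)$ was defined in \eqref{eqZKO} as exactly the orthogonal complement of $Z_K(\Omega)$, this orthogonality is what we need.

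First, I would apply the convolution condition \eqref{eqconvolgen} to rewrite
\[
(f,g)_K=(v_f,E^K_0 g)_0
\]
for the $v_f\in\calh_0$ guaranteed by $f\in\calh_{K*K}$. Next, I would insert the localization assumption \eqref{eqlocalizgen}, replacing $v_f$ by $E^\Omega_0 E^0_\Omega v_f$ on the left side of the inner product, which gives
\[
(f,g)_K=(E^\Omega_0 E^0_\Omega v_f,\,E^K_0 g)_0.
\]
Finally, I would use the adjointness relation from the second line of \eqref{eqaddproploc}, which allows shifting the extension $E^\Omega_0$ to the other factor as a restriction $E^0_\Omega$, yielding
\[
(f,g)_K=(E^0_\Omega v_f,\,E^0_\Omega E^K_0 g)_\Omega .
\]
Because $g\in Z_K(\Omega)$, the second factor vanishes, so $(f,g)_K=0$.

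Since $g\in Z_K(\Omega)$ was arbitrary, $f\in Z_K(\Omega)^\perp=\calh_K(\Omega)$, as required. There is no real obstacle here; the only point to be careful about is to apply the correct adjoint identity from \eqref{eqaddproploc} (the second one, not the isometry of the first line), since $v_f$ lives in $\calh_0$ while we need the inner product to land in $\calh_\Omega$ where the kernel condition on $g$ is formulated. Once that identity is used in the right direction, the conclusion follows in one line from $E^0_\Omega E^K_0 g=0$.
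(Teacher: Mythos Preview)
Your proof is correct and follows essentially the same route as the paper's own argument: apply \eqref{eqconvolgen}, then \eqref{eqlocalizgen}, then the adjointness in the second line of \eqref{eqaddproploc} to obtain the chain $(f,g)_K=(E^0_\Omega v_f,\,E^0_\Omega E^K_0 g)_\Omega$, which vanishes for $g\in Z_K(\Omega)$. The only cosmetic difference is that the paper derives this identity for arbitrary $w\in\calh_K$ first and then specializes to $w\in Z_K(\Omega)$, whereas you restrict to $g\in Z_K(\Omega)$ from the start.
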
 
\begin{proof}\RSlabel{Prolemneccond}
For $f\in\calh_K$ satisfying both conditions, and any $w\in \calh_K$
we get
\bql{eqfwchain}
\begin{array}{rcl}
(f,w)_K
&=&
(v_f, E^K_0w)_0\\
&=&
(E_0^\Omega E_\Omega^0 v_f, E^K_0w)_0\\
&=&
(E_\Omega^0 v_f, E^0_\Omega E^K_0w)_\Omega\\
\end{array} 
\eq
and this vanishes for $w\in Z_K(\Omega)$.
\end{proof}
\begin{theorem}\RSlabel{theEqui}
The conditions 
\eref{eqconvolgen} and \eref{eqlocalizgen} are equivalent to
\bql{eqaddass2}
f\in \calh_K(\Omega) \hbox{ and } f\in \calh_{K*K}
\eq
if $\calh_K$ is dense in $\calh_0$.
\end{theorem}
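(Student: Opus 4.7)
The ``$\Leftarrow$'' direction is immediate: by definition of $\calh_{K*K}=(E_0^K)^*(\calh_0)$, the identity \eref{eqconvolgen} is nothing but $f\in\calh_{K*K}$, and Lemma \RSref{lemneccond} already shows that \eref{eqconvolgen} together with \eref{eqlocalizgen} forces $f\in\calh_K(\Omega)$. Only the substantive ``$\Rightarrow$'' direction requires real work.

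For ``$\Rightarrow$'', assume $f\in\calh_K(\Omega)\cap\calh_{K*K}$ and pick $v_f\in\calh_0$ with $f=(E_0^K)^*v_f$. Density of $\calh_K$ in $\calh_0$ makes $E_0^K(\calh_K)$ dense in $\calh_0$, hence $(E_0^K)^*$ injective, so $v_f$ is unique, and \eref{eqconvolgen} is automatic from $(f,g)_K=((E_0^K)^*v_f,g)_K=(v_f,E_0^K g)_0$. It remains to verify \eref{eqlocalizgen}, equivalently $(I-T)v_f=0$, where $T:=E_0^\Omega E_\Omega^0$ is the $\calh_0$-orthogonal projection onto the closed subspace $E_0^\Omega(\calh_\Omega)$ supplied by \eref{eqaddproploc}. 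Combining the hypothesis $f\in Z_K(\Omega)^\perp$ with \eref{eqconvolgen} gives, for every $w\in Z_K(\Omega)$,
\begin{equation*}
0=(f,w)_K=(v_f,E_0^K w)_0=(Tv_f,E_0^K w)_0+((I-T)v_f,E_0^K w)_0,
\end{equation*}
and the middle summand equals $(E_\Omega^0 v_f,E_\Omega^0 E_0^K w)_\Omega=0$ by \eref{eqaddproploc} together with the defining property $E_\Omega^0 E_0^K w=0$. Hence $(I-T)v_f$ is $\calh_0$-perpendicular to $E_0^K(Z_K(\Omega))$ and, by construction, also lies in $\ker E_\Omega^0=(E_0^\Omega(\calh_\Omega))^\perp$.

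I expect the main obstacle to be turning these two perpendicularities into $(I-T)v_f=0$. The natural route is to establish the $\calh_0$-density
\begin{equation*}
\overline{E_0^K(Z_K(\Omega))}=\ker E_\Omega^0,
\end{equation*}
which confines $(I-T)v_f$ to $\ker E_\Omega^0\cap(\ker E_\Omega^0)^\perp=\{0\}$. The inclusion ``$\subseteq$'' is trivial; for ``$\supseteq$'' one uses density of $\calh_K$ in $\calh_0$ to approximate $h\in\ker E_\Omega^0$ by images $E_0^K g_n$, orthogonally splits $g_n=z_n+r_n$ in $\calh_K$ with $z_n\in Z_K(\Omega)$ and $r_n\in\calh_K(\Omega)$, observes that $E_\Omega^0 E_0^K r_n=E_\Omega^0 E_0^K g_n\to 0$, and then argues that this vanishing $\Omega$-trace propagates to $E_0^K r_n\to 0$ in $\calh_0$. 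That last inference is the delicate piece where the density hypothesis must earn its keep; once it is secured, $E_0^K z_n\to h$, the claimed density follows, $(I-T)v_f=0$, and \eref{eqlocalizgen} holds.
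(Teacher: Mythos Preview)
Your direction labels are swapped: what you call ``$\Leftarrow$'' (invoking Lemma~\RSref{lemneccond}) is actually the passage from \eref{eqconvolgen}--\eref{eqlocalizgen} to \eref{eqaddass2}. That is cosmetic; you have correctly identified which implication needs work.

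The real issue is the route you take to \eref{eqlocalizgen}. You use only the orthogonal-complement description $\calh_K(\Omega)=Z_K(\Omega)^\perp$, so you can test $(f,w)_K$ only against $w\in Z_K(\Omega)$, and this drives you into the auxiliary density claim $\overline{E_0^K(Z_K(\Omega))}=\ker E_\Omega^0$. You rightly flag the weak link in your sketch: concluding $E_0^K r_n\to 0$ in $\calh_0$ from $E_\Omega^0 E_0^K r_n\to 0$ in $\calh_\Omega$ for $r_n\in\calh_K(\Omega)$. That inference would require the restriction $E_\Omega^0$ to be bounded below on $E_0^K(\calh_K(\Omega))\subset\calh_0$, and this does \emph{not} follow from mere density of $\calh_K$ in $\calh_0$; it is an extra structural hypothesis the abstract framework does not supply. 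So the proposal has a genuine gap precisely where you suspect it.

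The paper bypasses all of this by using the other half of \eref{eqZKO}, the range description $\calh_K(\Omega)=(E_\Omega^0 E_0^K)^*(\calh_\Omega)$. Membership $f\in\calh_K(\Omega)$ then directly furnishes $f^\Omega\in\calh_\Omega$ with
\[
(f,w)_K=(f^\Omega,E_\Omega^0 E_0^K w)_\Omega=(E_0^\Omega f^\Omega,E_0^K w)_0
\quad\hbox{for \emph{all} }w\in\calh_K,
\]
not merely for $w\in Z_K(\Omega)$. Comparing with $(v_f,E_0^K w)_0$ from \eref{eqconvolgen} and invoking density of $E_0^K(\calh_K)$ in $\calh_0$ once yields $v_f=E_0^\Omega f^\Omega$, whence $E_0^\Omega E_\Omega^0 v_f=E_0^\Omega f^\Omega=v_f$. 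The density hypothesis does its job in one line, with no secondary density statement to prove.
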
 
\begin{proof}\RSlabel{ProtheEqui}
We only have to prove that the above conditions yield
\eref{eqlocalizgen}. The conditions imply
that there must be some $f^\Omega\in\calh_\Omega$ such that
$$
(f,w)_K
=
(v_f, E^K_0w)_0=(f^\Omega,E^0_\Omega E^K_0w)_\Omega
$$
for all $w\in\calh_K$. But then
$$ 
\begin{array}{rcl}
(v_f, E^K_0w)_0
&=&
(E_0^\Omega f^\Omega, E^K_0w)_\Omega\\
\end{array} 
$$
and by density we get $v_f=E_0^\Omega f^\Omega$ and 
$f^\Omega=E_\Omega^0 v_f$ and 
$$
v_f=E_0^\Omega f^\Omega=E_0^\Omega E_\Omega^0 v_f.
$$
\end{proof} 
The advantage of \eref{eqaddass2} 
is that the two conditions for 
smoothness and localization are decoupled, i.e. $\calh_K(\Omega)$
does not refer to $K*K$ in any way.
\biglf 
Two things are left to do: if we only assume smoothness,
i.e. $f\in\calh_{K*K}$, we should get superconvergence in the interior
of the domain, and the conditions \eref{eqaddass2} should contain
a hidden boundary condition. The examples \RSref{exaFEM} and \RSref{exaUniSpl} 
use differential operators explicitly, while Example \RSref{exaKerCase}
has pseudodifferential operators in the background. Therefore 
the next section adds details to Example
\RSref{exaKerCase}, building on the abstract results
of Sections \RSref{SecAbAp} and \RSref{SecLoc}.
%%%%%%%%%%%%%%%%%%%%%%%%%%%%%%%%%%%%%%%%%%5
\section{Fourier Transform Spaces}\RSlabel{SecFTS}
By $\calh_K$ we 
denote the global Hilbert space on $\R^d$
generated by a translation-invariant Fourier-transformable
(strictly) positive definite kernel $K$ with strictly positive
Fourier transform $\hat{K}$, and the inner product will be
denoted by $(.,.)_K$ for simplicity. 
For elements $f,\,g\in \calh_K$ the
inner product in Fourier representation is
\bql{eqFouRep}
(f,g)_K=\displaystyle{\int_{\R^d}\frac{\hat f(\omega)\overline{\hat g(\omega)}}%
{\hat K(\omega)}d\omega   }
\eq
where we ignore the correct multipliers for simplicity, 
even though we later
use Parseval's identity.
We can rewrite this as
\bql{eqKLK}
\begin{array}{rcl}
(f,g)_K&=&\displaystyle{\int_{\R^d}
\frac{\hat f(\omega)}{\sqrt{\hat K(\omega)}}
\frac{\overline{\hat g(\omega)}}%
{\sqrt{\hat K(\omega)}}d\omega   }\\
&=& (L_K(f),L_K(g))_{L_2(\R^d)}
\end{array} 
\eq
with the standard isometry $L_K\;:\;\calh_K\to \calh_0:=L_2(\R^d)$ defined by
$$
L_K(f)=\left(\frac{\hat f}{\sqrt{\hat K}}\right)^\vee
$$ 
and the somewhat sloppy convolution notation
\bql{eqslopcon}
f=L_K(f)*\sqrt{K}
\eq
involving the {\em convolution-root} of $K$, i.e. the kernel with
\bql{eqKKK}
(\sqrt{K})^\wedge(\omega)=\sqrt{\hat K(\omega)} \fa \omega\in\R^d
\eq
such that $K=\sqrt{K}*\sqrt{K}$. 
%\red{This is one case of a {\em convolution factorization} that we shall study 
%more generally in Section \RSref{SecConFac}. }
\biglf
In a similar way we define $\calh_{K*K}$ and $L_{K*K}$ 
to get $v_f=L_{K*K}f$ by \eref{eqconvol}.
In case of $g=K(x,\cdot)$ in \eref{eqconvolgen}, we have 
\bql{eqf2KK} 
\begin{array}{rcl}
f(x)&=&
(f,K(x,\cdot))_K\\
&=&
(L_{K*K}(f),K(x,\cdot))_{L_2(\R^d)}\\
&=&
(f,L_{K*K}K(x,\cdot))_{L_2(\R^d)}
\end{array}
\eq
under certain additional conditions.
The second line allows to recover particular solutions of the equation
$L_{K*K}f=g$ for sufficiently smooth $f$, while the standard use of the third
is connected to $K(x,\cdot)$ being a fundamental solution to that equation. 
Both cases arise very frequently in papers that solve partial differential
equations via kernels, using fundamental or particular solutions.
See e.g. \RScite{li-et-al:2010-1} for short survey of both,
with many references.
\biglf
For Theorem \RSref{theEqui} we need that $\calh_K$ is dense 
in $\calh_0=L_2(\R^d)$. By a simple Fourier transform argument, any
$f\in \calh_0=L_2(\R^d)$ that is orthogonal to all functions
in $\calh_K$ must have the property $\hat f \cdot \sqrt{\hat K}=0$ 
almost everywhere,
and thus $f=0$ in $L_2$. 
\biglf
In the Fourier transform situation,
the extension of a function $f\in\calh_K(\Omega)$ to a global function
already contains a hidden boundary condition that does not explicitly appear
in practice. For any $f\in\calh_K(\Omega)$ there is a function $f_\Omega\in
\calh_\Omega=L_2(\Omega)$
such that $f=(E_\Omega^0 E^K_0)^* f_\Omega$, i.e.
$$
\begin{array}{rcl}
(f,v)_K&=&(L_Kf,L_Kv)_{L_2(\R^d)}\\
%&=&(E^0_\Omega L_Kf,E^0_\Omega L_Kv)_{L_2(\Omega)}+(E^0_{\OOmega}
%  L_Kf,E^0_{\OOmega} L_Kv)_{L_2(\OOmega)}\\
&=&
(f_\Omega, E_\Omega^0 E^K_0v)_{L_2(\Omega)} \fa v\in \calh_K.
\end{array} 
$$
We can split $\calh_0=L_2(\R^d)$ for any domain
$\Omega$ into a direct orthogonal sum of 
$\calh_\Omega$ and $\calh_{\overline{\Omega}}$, the domain
$\overline{\Omega}$ being the closure of the complement of $\Omega$.
Then 
\bql{eqomegasplit}
\begin{array}{rcl}
0&=&(E_\Omega^0 L_Kf-f_\Omega,E_\Omega^0 L_Kv)_{L_2(\Omega)}\\
0&=& (E_{\OOmega}^0 
  L_Kf,E_{\OOmega}^0  L_Kv)_{L_2(\OOmega)}\\
\end{array} 
\eq
for all  $v\in \calh_K$. If we have additional smoothness in the sense
$f\in\calh_{K*K}$, then
$$
\begin{array}{rcl}
(f,v)_K
&=&
(L_{K*K}f,E^K_0v)_{L_2(\R^d)}=(f_\Omega, E_\Omega^0 E^K_0v)_{L_2(\Omega)}\\
\end{array}
$$
implies $f_\Omega=E^0_\Omega L_{K*K}f$ and $0=E^0_{\OOmega} L_{K*K}f$. i.e.
the equation $L_{K*K}f=0$ holds in $\OOmega$. This motivates 
\begin{definition}\RSlabel{defWS}
If $f\in \calh_K$ satisfies the second equation of \eref{eqomegasplit} 
for all $v\in \calh_K$, we say that $f$ is a $\calh_K$-weak solution
of $L_{K*K}f=0$ in $\OOmega$. 
\end{definition} 
\begin{theorem}\RSlabel{theWeakBC}
The functions $f\in \calh_K(\Omega)$ are $\calh_K$-weak
solutions of $L_{K*K}f=0$ on $\OOmega$. \qed
\end{theorem}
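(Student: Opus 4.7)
The plan is to observe that the statement is essentially an unpacking of the derivation of \eref{eqomegasplit} followed by an appeal to Definition \RSref{defWS}; the real work was done in the paragraph preceding the theorem.

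First I would invoke the characterization $\calh_K(\Omega) = (E_\Omega^0 E_0^K)^*(\calh_\Omega)$ from \eref{eqZKO}: given $f\in \calh_K(\Omega)$, pick the $f_\Omega \in \calh_\Omega = L_2(\Omega)$ with $f=(E_\Omega^0 E_0^K)^* f_\Omega$. The adjoint identity immediately yields
$$
(f,v)_K = (f_\Omega, E_\Omega^0 E_0^K v)_{L_2(\Omega)} \quad \text{for every } v\in \calh_K.
$$

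Next I would rewrite the same inner product through the Fourier-transform isometry \eref{eqKLK} as $(L_Kf,L_Kv)_{L_2(\R^d)}$, and split $L_2(\R^d)$ into the orthogonal direct sum $L_2(\Omega) \oplus L_2(\OOmega)$. Matching the two expressions for $(f,v)_K$ and letting $v$ vary throughout $\calh_K$ produces precisely the two identities of \eref{eqomegasplit}. The $\Omega$-part recovers the first equation; the $\OOmega$-part receives no contribution from $f_\Omega$ (which is supported in $\Omega$) and therefore gives the second equation,
$$
(E_{\OOmega}^0 L_K f, E_{\OOmega}^0 L_K v)_{L_2(\OOmega)} = 0 \quad \text{for every } v\in \calh_K.
$$
By Definition \RSref{defWS}, this is the assertion that $f$ is a $\calh_K$-weak solution of $L_{K*K}f=0$ in $\OOmega$, which is exactly the conclusion of the theorem.

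There is no real obstacle here; the proof is essentially a bookkeeping exercise that converts the derivation of \eref{eqomegasplit} into the language of Definition \RSref{defWS}. The only subtlety worth mentioning explicitly is that separating the $L_2(\Omega)$- and $L_2(\OOmega)$-contributions when matching the two expressions for $(f,v)_K$ uses the fact that $L_K$ is an isometric isomorphism of $\calh_K$ onto $L_2(\R^d)$, so the pair $(E_\Omega^0 L_Kv, E_{\OOmega}^0 L_K v)$ sweeps out arbitrary pairs in $L_2(\Omega) \times L_2(\OOmega)$ as $v$ ranges over $\calh_K$; this independence is what forces the two equations of \eref{eqomegasplit} to hold separately rather than only in sum.
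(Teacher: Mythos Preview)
Your proposal is exactly the paper's own argument: the theorem carries a \qed\ because the author regards the derivation of \eref{eqomegasplit}, followed by Definition~\RSref{defWS}, as the complete proof, and that is precisely what you have written out.

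One caution about your closing paragraph, though. The independence argument you add does not work as stated. It is true that $L_K$ is onto $L_2(\R^d)$, so $(E_\Omega^0 L_Kv,\,E_{\OOmega}^0 L_Kv)$ sweeps out all of $L_2(\Omega)\times L_2(\OOmega)$. But the right-hand side you are matching against is $(f_\Omega,\,E_\Omega^0 E_0^K v)_{L_2(\Omega)}$, and $E_0^K$ is the \emph{inclusion} $\calh_K\hookrightarrow L_2(\R^d)$, not the isometry $L_K$. Hence the right-hand side depends on $v|_\Omega$, not on $(L_Kv)|_\Omega$; and $v|_\Omega$ is certainly not decoupled from $(L_Kv)|_{\OOmega}$, since $v$ is determined by $L_Kv$ through a nonlocal convolution. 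You therefore cannot freeze the $\Omega$-component of $L_Kv$ while varying the $\OOmega$-component and expect the right-hand side to stay fixed, which is what your separation step requires. The paper itself does not supply this justification either---it simply records \eref{eqomegasplit}---so your main write-up is faithful to the source; but the extra ``subtlety'' paragraph should be dropped rather than presented as the mechanism that forces the two equations apart.
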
 
In a somewhat sloppy formulation, the functions $f\in\calh_K(\Omega)$
are extended to $\calh_K(\R^d)$ by $\calh_K$-weak solutions
of $L_{K*K}f=0$
outside $\Omega$. 
\begin{corollary}\RSlabel{corStrongCase}
The functions $f\in \calh_{K*K}\cap \calh_K(\Omega)$,
i.e. those with superconvergence, are strong solutions
of $L_{K*K}f=v$ in $\R^d$ with a function $v\in L_2(\Omega)$ extended by zero to
$\R^d$. \qed
\end{corollary}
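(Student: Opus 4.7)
The plan is to read off the corollary as a direct consequence of Theorem \RSref{theEqui} together with the Fourier-analytic identification of $v_f$ with the pseudodifferential operator $L_{K*K}$ applied to $f$. The point that distinguishes \emph{strong} from the weak statement of Theorem \RSref{theWeakBC} is that the hypothesis $f\in\calh_{K*K}$ turns $L_{K*K}f$ into a bona fide $L_2(\R^d)$ function, not merely an element of a dual pairing.

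First, I would invoke Theorem \RSref{theEqui}, whose hypothesis that $\calh_K$ is dense in $\calh_0=L_2(\R^d)$ has been verified just before Definition \RSref{defWS} by the Fourier argument $\hat f\cdot\sqrt{\hat K}=0$ a.e.\ $\Rightarrow f=0$. The theorem then shows that $f\in \calh_{K*K}\cap \calh_K(\Omega)$ is equivalent to the pair of conditions \eref{eqconvolgen} and \eref{eqlocalizgen}. In particular there exists $v_f\in L_2(\R^d)$ with
$$(f,g)_K=(v_f,E_0^Kg)_0 \fa g\in\calh_K,$$
and moreover $v_f=E_0^\Omega E_\Omega^0 v_f$, i.e.\ $v_f$ vanishes almost everywhere on $\OOmega$.

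Next, I would identify $v_f$ with $L_{K*K}f$. Comparing \eref{eqconvolgen} to \eref{eqFouRep} and using $\widehat{K*K}=\hat K^2$, the map $L_{K*K}\;:\;f\mapsto (\hat f/\hat K)^\vee$ is exactly the pseudodifferential operator taking $\calh_{K*K}$ isometrically into $L_2(\R^d)$, so $v_f=L_{K*K}f$ as an equality in $L_2(\R^d)$. Setting $v:=E_\Omega^0 v_f\in L_2(\Omega)$, the localization condition then reads
$$L_{K*K}f=v_f=E_0^\Omega v \hbox{ in } L_2(\R^d),$$
which is the strong formulation claimed: $L_{K*K}f$ agrees almost everywhere on $\R^d$ with the extension-by-zero of a function $v\in L_2(\Omega)$.

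The only subtle point is that this is genuinely \emph{stronger} than the weak statement of Theorem \RSref{theWeakBC}; without the extra smoothness $f\in\calh_{K*K}$ the symbol $L_{K*K}f$ need not denote an element of $L_2(\R^d)$, and the vanishing outside $\Omega$ only makes sense through the pairing in \eref{eqomegasplit}. Thus the main work is bookkeeping: once smoothness promotes $L_{K*K}f$ from a pairing to an $L_2$ function and localization pins down its support inside $\OOmega$, the corollary follows with no further computation.
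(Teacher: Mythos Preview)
Your proof is correct and matches the paper's reasoning. The paper states the corollary without proof (marked \qed) because the argument is already contained in the computation immediately preceding Definition \RSref{defWS}: from $f\in\calh_K(\Omega)$ one has $(f,w)_K=(f_\Omega,E_\Omega^0 E_0^K w)_{L_2(\Omega)}$, and from $f\in\calh_{K*K}$ one has $(f,w)_K=(L_{K*K}f,E_0^K w)_{L_2(\R^d)}$; density then forces $E_\Omega^0 L_{K*K}f=f_\Omega$ and $E_{\OOmega}^0 L_{K*K}f=0$. Your route via Theorem \RSref{theEqui} to obtain \eref{eqlocalizgen} and then the identification $v_f=L_{K*K}f$ is the same argument in the abstract notation of Section \RSref{SecLoc} rather than the Fourier notation of Section \RSref{SecFTS}, so there is no substantive difference.
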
 
%****************************************************************
\section{The Sobolev Case}\RSlabel{SecSob}
Our main example is Sobolev space $W_2^m(\R^d)$ with the exponentially decaying
Whittle-Mat\'ern kernel
$$
W_{m,d}(r)=r^{m-d/2}K_{m-d/2}(r),\;r=\|x-y\|_2,\;x,\,y\in\R^d
$$
written in radial form using the modified Bessel function $K_{m-d/2}$ 
of second kind. We use the notation $K$ for kernels differently 
elsewhere. 
\biglf
For the kernel $K=W_{m,d}$, the inverse of the mapping 
$L_{K*K}=L_{W_{2m,d}}\;:\;W_2^{2m}(\R^d)\to L_2(\R^d)$ is the convolution with
the kernel ${K}=W_{m,d}$, and thus $L_{K*K}$ 
coincides with the differential operator
$(Id-\Delta)^m$ that has the generalized Fourier transform 
$(1+\|\omega\|_2^2)^{m}$. Now Theorem \RSref{theWeakBC} implies that
all $f\in \calh_K(\Omega)$ are  $W_2^{m}(\R^d)$-weak solutions of
the partial differential equation $(Id-\Delta)^m f=0$ outside $\Omega$,
while Corollary \RSref{corStrongCase} implies that functions
$f\in \calh_{K*K}\cap \calh_K(\Omega)$ are strong solutions. 
%By \red{local elliptic regularity}, the solutions are strong,
%and they imply that the global inner product in the form 
%$$
%(f,g)_K=(L_{K*K}f,g)_{L_2(\R^d)}=\int_{\R^d}g(x) ((Id-\Delta)^m f)(x)dx,
%$$
%localizes to $\Omega$ for functions $f,g\in \calh_{K*K}\cap \calh_K(\Omega)$.
Conversely, the functions $f\in\calh_K(\Omega)$
are extended to $\calh_K(\R^d)$ by weak solutions
of $(Id-\Delta)^m f=0$
outside $\Omega$ that satisfy boundary conditions at infinity
and on $\partial\Omega$  
to ensure $f\in\calh_K$. Since the functions in $\calh_K(\Omega)$
and $W_2^m(\Omega)$ are the same, the extension over $\partial\Omega$ 
is always possible and poses no restrictions to functions in $\calh_K(\Omega)$.
\begin{example}\RSlabel{exaW21}
As an illustration, consider $\calh_K=W_2^2(\R)$ with the radial
kernel $(1+r)\exp(-r)$ up to a constant factor. Solutions
of $L_4f:=(f-f'')-(f-f'')''=0$  are linear combinations of
$e^x, \,xe^x, \,e^{-x},\, xe^{-x}$, and for $\Omega=[a,b]$ we
see that functions $f\in W_2^2[a,b]$ are extended for $x\leq a$ by 
linear combinations of
$e^x$ and $xe^x$ only, while for $x\geq b$ one has to take
the basis $e^{-x},\, xe^{-x}$ to have the
extended function in  $\calh_K=W_2^2(\R)$. This poses no additional
constraints for functions in  $W_2^2[a,b]$, because only $C^1$ continuity
is necessary, and the extensions are unique.
\biglf
Similarly, functions 
$f\in \calh_{K*K}\cap \calh_K(\Omega)=W_2^{4}(\R)\cap W_2^2[a,b]$ 
are strong solutions of $L_4f=0$ outside $[a,b]$ with full
$W_2^{4}(\R)$ continuity over the boundary. Here, the hidden boundary
conditions creep in when one starts with arbitrary functions
from $W_2^4[a,b]$. Not all of these have $W_2^{4}(\R)$-continuous
extensions to solutions of $L_4f=0$ outside $[a,b]$, because we now
need $C^3$ smooth transitions to the span of $e^x$ and $xe^x$ for $x\leq a$
and to $e^{-x},\, xe^{-x}$ for $x\geq b$. An explicit calculation
yields the necessary boundary conditions
$$
f(a)=f'(a)=f''(a)=f'''(a),\;f(b)=-f'(b)=f''(b)=-f'''(b).
$$
We come back to the example in Section \RSref{SecExa}.
\end{example} 
In general, the exterior problem $(Id-\Delta)^m f=0$
outside $\Omega$ is always weakly uniquely solvable for boundary conditions
coming from a function $f\in W_2^m(\Omega)$, the solution being
obtainable by the standard kernel-based extension. This is 
no miracle, because $K(x,\cdot)$ is the fundamental
solution of  $(Id-\Delta)^m =0$ at $x$ in the sense of 
Partial Differential Equations, and superpositions
of such functions with $x\in\Omega$ will always satisfy $(Id-\Delta)^m =0$ 
outside $\Omega$. 
\biglf
However, strong solutions of  $(Id-\Delta)^m =0$ outside
$\Omega$ with $W_2^{2m}(\R^d)$ regularity will not necessarily exist 
as extensions of arbitrary functions in $W_2^{2m}(\Omega)$,
as the above example explicitly shows. This is no objection
to the fact that all such functions have extensions to $\R^d$ with
$W_2^{2m}(\R^d)$ regularity, but not all of these extensions are 
in $\calh_K(\Omega)$ to provide superconvergence.  
%****************************************************************
%\subsection{The Wendland Case}\RSlabel{SubSecWend}
\begin{example}\RSlabel{exaWend1}
The compactly supported Wendland kernels \RScite{wendland:1995-1} 
are reproducing in Hilbert spaces that are norm-equivalent to Sobolev spaces,
but their associated pseudodifferential operators $L_{K*K}$
with symbols $\hat{K}^{-1}$ are somewhat messy
because their Fourier transforms \RScite{chernih-hubbert:2014-1} are.
Nevertheless, the kernel  translate $K(x,\cdot)$ is a fundamental solution
of $L_{K*K}f=0$ at $x$, and the fundamental solutions have the nice property
of compact support. Further details are left open.
%But in contrast to the Sobolev case we can now
%use the convolution form of the map $L_{K*K}$ 
%and a result of \RScite{ehm-et-al:2004-1} on convolution roots
%of nonnegative positive definite compactly supported radial kernels
%to see that $L_{K*K}^{-1}$ maps functions with compact support to 
%functions with compact support again. 
\end{example}
%\biglf
%\RScite{ehm-et-al:2004-1}
\begin{example}\RSlabel{exaGau}
For other situations with pointwise meaningful pseudodifferential operators
like in the Gaussian case with 
$$
L_{K*K}f=\displaystyle{\sum_{n=0}^\infty \frac{(-\Delta)^n f}{n!}   }
$$
up to scaling, the same argument as in the Sobolev case should work,
but details are left to future work.
\end{example} 
%%%%%%%%%%%%%%%%%%%%%%%%%%%%%%%%%%%%%%%%%%%%%%
\section{Interior Superconvergence}\RSlabel{SecIntSupConv}
We now add more detail to the argument sketched at the end
of Section \RSref{SecLoc}, aiming at a proof
of superconvergence in the interior of the domain, 
if only the smoothness assumption holds, not the localization. 
\biglf
Assume a function $f\in \calh_{K*K}$ to be given, and split it into
a ``good'' and a ``bad'' part, i.e.
$$
f=g*K=g_1*K+g_2*K,\;g=g_1+g_2
$$
with $g_1$ supported in $\Omega$ and $g_2$ supported outside $\Omega$.
We would have superconvergence if we would work exclusively on
the good part $f_1=g_1*K$, by Sections \RSref{SecAbAp} and \RSref{SecLoc}.
\biglf
We focus on the bad part $f_2=g_2*K$ and want to bound it inside $\Omega$.
Assume that a ball $B_R(x)$ of radius $R$ around $x$ is still in $\Omega$.
Then we use \eref{eqf2KK} to get 
$$ 
\begin{array}{rcl}
f_2^2(x)
&\leq & \int_{\R^d\setminus \Omega} g_2^2(y)dy 
\cdot \int_{\R^d\setminus \Omega} K(x-y)^2dy\\  
&\leq & \int_{\R^d\setminus \Omega} g_2^2(y)dy 
\cdot \int_{\R^d\setminus B_R(x)} K(x-y)^2dy\\  
&=& \int_{\R^d\setminus \Omega} g_2^2(y)dy 
\cdot \int_{\R^d\setminus B_R(0)} K(y)^2dy,  
\end{array} 
$$
the second factor being a decaying function of $R$ that is independent of the 
size and placement of $\Omega$. Consequently, for each kernel $K$ there is a
radius $R$ such that the bad part of the split is not visible
within machine precision, if points have a distance of at least $R$ from the
boundary. In a somewhat sloppy form, we have
\begin{theorem}\RSlabel{theHKBE}
If there is $\calh_{K*K}$ smoothness, superconvergence 
can be always observed far enough inside the domain. If the kernel decays
exponentially towards infinity, this boundary effect
decays exponentially with the distance from the boundary.\qed 
\end{theorem}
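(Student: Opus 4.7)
The plan is to formalize the splitting argument sketched immediately above the theorem. First I would write any given $f \in \calh_{K*K}$ as $f = K * g$ with $g \in L_2(\R^d)$, and split $g = g_1 + g_2$ by restricting to $\Omega$ and its complement. This yields $f = f_1 + f_2$ with $f_i = K * g_i$; the first summand has generator supported in $\Omega$, the second outside.

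Next I would dispatch the ``good'' part $f_1$ directly by the preceding theory: since $g_1$ is supported in $\Omega$, Theorem \RSref{theEqui} places $f_1 \in \calh_{K*K,\Omega}$, and Theorem \RSref{theSupConvLoc} gives the full superconvergence bound $\|E^0_\Omega E^K_0(f_1 - \Pi_K f_1)\|_\Omega \leq \epsilon^2 \|g_1\|_0$. For the ``bad'' part $f_2$ I would then reproduce the Cauchy--Schwarz estimate shown above the theorem statement: for any $x$ with $B_R(x) \subset \Omega$,
\[
|f_2(x)|^2 \leq \|g_2\|_{L_2(\R^d\setminus\Omega)}^2 \cdot \int_{\R^d \setminus B_R(0)} K(y)^2\, dy,
\]
the second factor being independent of $x$ and of $\Omega$ and depending only on the kernel. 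Whenever $K$ has exponential decay, e.g.\ the Whittle--Mat\'ern kernels of Section \RSref{SecSob}, this tail integral decays exponentially in $R$, which yields the second sentence of the theorem.

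Combining with linearity of $\Pi_K$, the error on the interior subdomain $\Omega_R := \{x \in \Omega : \mathrm{dist}(x,\partial\Omega) \geq R\}$ decomposes as
\[
(f - \Pi_K f)|_{\Omega_R} = (f_1 - \Pi_K f_1)|_{\Omega_R} + (f_2 - \Pi_K f_2)|_{\Omega_R},
\]
where the first summand is superconvergent and the second is controlled, in the quantitative sense just established, by the exponentially small tail.

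The main obstacle is precisely the control of $(f_2 - \Pi_K f_2)|_{\Omega_R}$: the projector $\Pi_K$ is driven by nodes throughout $\Omega$, so $\Pi_K f_2$ need not be pointwise small in $\Omega_R$ merely because $f_2$ is, and a rigorous bound requires combining $\|f_2 - \Pi_K f_2\|_K \leq 2\|f_2\|_K$ with a sampling inequality to transport the norm estimate back to a pointwise statement on $\Omega_R$. This tension between a globally defined projector and a purely local smallness of $f_2$ is presumably why the author states the theorem in an explicitly ``sloppy'' form: the honest content is that the $f_2$-contribution to the error is dominated by the tail integral above and therefore falls below any practical precision once the distance to the boundary is moderate, so far enough inside $\Omega$ the superconvergent rate enjoyed by $f_1$ is the rate one actually observes.
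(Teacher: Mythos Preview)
Your proposal is correct and follows exactly the paper's own argument: the same good/bad splitting $f = f_1 + f_2$, the same invocation of the earlier superconvergence theory for $f_1$, and the same Cauchy--Schwarz tail bound for $f_2$. In fact you are more careful than the paper itself, which bounds only $f_2$ pointwise and never discusses $\Pi_K f_2$; your identification of the tension between the global projector and the local smallness of $f_2$ is precisely the reason the author prefaces the theorem with ``in a somewhat sloppy form'' and phrases the conclusion in terms of what is observable within machine precision rather than as a rigorous rate.
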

\begin{corollary}\RSlabel{corTheHKBE}
If there is only $\calh_K$ smoothness, one can work with 
the convolution square root $\sqrt{K}$ instead of $K$, and still
get the convergence rate expected for working with $K$, but only far enough in
the interior of the domain. \qed
\end{corollary}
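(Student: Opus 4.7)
The plan is to specialize Theorem \ref{theHKBE} with the roles of the two kernels shifted by one ``half-step''. First, I would observe that $\sqrt{K}$, defined by $(\sqrt{K})^\wedge=\sqrt{\hat K}$ in \eqref{eqKKK}, is itself a translation-invariant, continuous, strictly positive definite, Fourier-transformable kernel with strictly positive Fourier transform. Consequently the whole framework of Sections \ref{SecAbAp}--\ref{SecFTS} applies verbatim with $K$ replaced by $\sqrt{K}$. The key algebraic fact is that $\sqrt{K}*\sqrt{K}=K$, so the ``extended smoothness'' space $\calh_{\sqrt{K}*\sqrt{K}}$ relative to the new kernel coincides with $\calh_K$, and the hypothesis $f\in\calh_K$ automatically supplies the smoothness condition \eqref{eqconvolgen} needed to invoke superconvergence in the $\sqrt{K}$-picture.

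Next I would read off, from this reinterpretation, that $\Pi_{\sqrt{K}}$ corresponds to best approximation from spans of $\sqrt{K}$-translates centered in $\Omega$. The standard error bound \eqref{eqlocstabnd} for this weaker-kernel process produces some constant $\epsilon$; this $\epsilon$ is precisely what the statement means by ``the convergence rate expected for working with $\sqrt{K}$''. Since the localization condition \eqref{eqlocalizgen} generally fails for $f\in\calh_K$ (the convolution factor $v_f$ in $f=v_f*\sqrt{K}$ need not be supported in $\Omega$), one cannot apply Theorem \ref{theSupConvLoc} globally. Instead I would invoke Theorem \ref{theHKBE} in the $\sqrt{K}$-setting: far enough inside $\Omega$, the error behaves as if localization held, i.e.\ at the improved rate $\epsilon^2$, with the width of the boundary layer governed by the decay of $\sqrt{K}$ at infinity.

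The final step is to identify $\epsilon^2$ with the rate that would be obtained by direct $K$-interpolation in $\calh_K$. This is a Fourier-side bookkeeping matter: since the symbol $1/\hat K$ is literally the square of $1/\sqrt{\hat K}$, the sampling-inequality estimates that yield $\epsilon$ for $\sqrt{K}$ in its native space, when rerun for $K$ in $\calh_K$, give exactly $\epsilon^2$. Combining this identification with the previous step delivers the claimed interior bound.

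The step I expect to be the main obstacle is verifying that Theorem \ref{theHKBE}'s interior-only conclusion carries over intact when the kernel is $\sqrt{K}$ rather than $K$. Its proof relied on $\int_{\R^d\setminus B_R(0)} K(y)^2\,dy$ decaying rapidly in $R$; the analogous quantity built from $\sqrt{K}$ has to decay fast enough to make the boundary layer numerically negligible. For the Whittle-Mat\'ern family this is automatic, since the convolution root of $W_{2m,d}$ is, up to a constant, $W_{m,d}$, which still decays exponentially; for a general kernel this integrability is precisely what one would need to check to make the statement rigorous.
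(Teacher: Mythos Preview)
Your proposal is correct and takes essentially the same approach as the paper: the corollary is stated with an immediate \qed because it is exactly Theorem~\ref{theHKBE} applied with $\sqrt{K}$ in place of $K$, using $\sqrt{K}*\sqrt{K}=K$ so that $\calh_K$-smoothness of $f$ becomes the required ``$\calh_{K*K}$''-smoothness in the shifted picture. Your additional remarks on the decay of $\sqrt{K}$ and on matching $\epsilon^2$ with the $K$-rate are reasonable elaborations of points the paper leaves informal.
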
 
For kernels with compact support,
the subdomain with superconvergence is clearly defined.  
Furthermore, this has consequences for multiscale methods that use kernels
with shrinking supports. The subdomains with superconvergence
will grow when the kernel support shrinks. 
%%%%%%%%%%%%%%%%%%%%
%****************************************************************
\section{Mercer Extensions}\RSlabel{SecMExt}
The quest for functions with guaranteed superconvergence
has a simple outcome: there are complete $L-2(\Omega)$-orthonormal 
systems of those,
and they arise via Mercer expansions of kernels.
We assume a continuous translation-invariant symmetric (strictly)
positive definite Fourier-transformable kernel $K$
on $\R^d$ to be given, with ``enough'' decay at infinity.
It is reproducing in a global native space $\calh_K$ of functions on all
of $\R^d$. 
On any bounded domain $\Omega\subset\R^d$ we have a Mercer expansion
$$
K(x-y)
=\displaystyle{\sum_{n=0}^\infty \kappa_n\varphi_n(x)\varphi_n(y)=:
K_\kappa(x,y)   } 
$$ 
into orthonormal functions $\varphi_n\in L_2(\Omega)$ that are
orthogonal in the native Hilbert space $\calh(\Omega, K_\kappa)$ 
of $K_\kappa$ 
that is defined via expansions
\bql{eqfrep}
f(x)=\displaystyle{\sum_{n=0}^\infty (f,\varphi_n)_{L_2(\Omega)}
  \varphi_n(x),\;x,\,y\in \Omega  } 
\eq
and the inner product
$$
(f,g)_{\Omega,K_\kappa}:=\displaystyle{ \sum_{n=0}^\infty 
\frac{(f,\varphi_n)_{L_2(\Omega)}(g,\varphi_n)_{L_2(\Omega)}}{\kappa_n}  }
$$
such that
$$
(\varphi_j,\varphi_k)_{\Omega,K_\kappa}=\dfrac{\delta_{jk}}{\kappa_k}.
$$
It is clear that the functions $\varphi_n$ and the eigenvalues $\kappa_n$
depend on the domain $\Omega$ chosen, but we do not represent this fact in the
notation. Furthermore,
the close connection to Example \RSref{exaSeqSpe} in Section
\RSref{SecAbAp} is apparent. 
% In contrast to the previous section, we cannot change 
% the $\kappa_n$ here without changing the basic kernel $K$
% that we want to keep fixed. 
\biglf
We have to  
distinguish between the space $\calh(\Omega,K_\kappa)$ that is defined
via the expansion of $K$ into $K_\kappa$ on $\Omega$
and the space $\calh_K(\Omega)$ of Lemma \RSref{lemStaLocOrth}
in Section \RSref{SecFTS}. Since we now know that 
extensions and restrictions have to be handled carefully, 
and since the connection between local Mercer expansions and extension maps
to $\R^d$ does not seem to be treated in the literature to the required extent,
we have to proceed slowly. 
\biglf
Our first 
goal is to consider how the functions $\varphi_n$ can be extended to all of
$\R^d$, and what this means for the kernel. Furthermore,
the relation between the native spaces 
$\calh_K$, $\calh(\Omega,K_\kappa)$,
and $\calh_K(\Omega)$  is
interesting. 
\biglf
Besides the standard reproduction properties in $\calh(\Omega,K_\kappa)$, 
a Mercer
expansion  allows to write the integral operator 
\bql{eqIntOp}
(I^\Omega f)(x):=
\displaystyle{\int_\Omega K(x-y)f(y)dy =:(K*_\Omega f)(x)  } \fa x\in \Omega
\eq
as a multiplier operator
$$
f(x) \mapsto
(I^\Omega f)(x)=\displaystyle{
\sum_{n=0}^\infty \kappa_n (f,\varphi_n)_{L_2(\Omega)} \varphi_n(x)  } 
$$
with a partially defined inverse, a ``pseudodifferential''
multiplier operator
$$
f(x) \mapsto
(D^\Omega f)(x)=\displaystyle{
\sum_{n=0}^\infty \frac{(f,\varphi_n)_{L_2(\Omega)}}{\kappa_n} \varphi_n(x)  } 
$$
defined on all $f$ with
$$
\displaystyle{\sum_{n=0}^\infty 
\frac{(f,\varphi_n)^2_{L_2(\Omega)}}{\kappa^2_n}}<\infty.
$$
For such $f$, there is a local $L_2$ reproduction equation
$$
f(x)=(D^\Omega f,K(x,\cdot))_{L_2(\Omega)} 
$$
that trivially follows from 
$$
(I^\Omega f)(x)=(f, K(x,\cdot))_{L_2(\Omega)} =(K*_\Omega f)(x)
$$
and is strongly reminiscent of Taylor's formula.
The eigenvalue equation
\bql{eqeig}
\kappa_n \varphi_n(x) =\displaystyle{\int_\Omega K(x-y)  \varphi_n(y)dy
}
\fa x\in \Omega,\;n\geq 0
\eq 
can serve to extend $\varphi_n$ to all of $\R^d$. Note that we cannot use the 
norm-minimal extension in $\calh_K$ at this point,
because so far there is no connection
between these spaces. 
If we define an
{\em eigensystem extension } $\varphi_n^E$ by 
$$
\kappa_n \varphi^E_n(x) :=\displaystyle{\int_\Omega K(x-y)  \varphi_n(y)dy
}
\fa x\in \R^d,\;n\geq 0
$$
we need the decay assumption
$$
\int_\Omega K(x-y)^2dy<\infty
$$
to make the definition feasible pointwise, and if we introduce the
characteristic function $\chi_\Omega$, we can write
$$
\kappa_n \varphi^E_n= K*(\chi_\Omega \varphi_n)
$$ 
to see that $\varphi^E_n$ is well-defined as a function with Fourier transform
$$
\kappa_n (\varphi^E_n)^\wedge=K^\wedge \cdot (\chi_\Omega \varphi_n)^\wedge
=K^\wedge \cdot (\chi_\Omega \varphi_n^E)^\wedge,
$$
and it thus lies in $\calh_{K*K}$ and can be embedded into 
$\calh_K$. We note in passing that global eigenvalue
equations like the local one in 
\eref{eqeig} cannot work except in $L_2$ with the delta
``kernel'', because $\kappa_n \hat \varphi_n=\hat K \cdot 
\hat \varphi_n$
would necessarily hold.
\biglf
Anyway, from $\varphi^E_n(x)=\varphi_n(x)$ on $\Omega$ we get that
the eigenvalue equation \eref{eqeig} also holds for $\varphi_n^E$
and then for all $x\in\R^d$. Furthermore, the functions 
$\varphi^E_n$ 
% are in $V_{K*K}(\Omega)\subset \calh(K*K,\Omega)$
satisfy the sufficient conditions for 
superconvergence, and thus they are in $\calh_K(\Omega)\cap \calh_{K*K}$.  
\biglf
We now use the notation in \eref{eqNSgen} again. 
Hitting the eigenfunction equation with $\mu_{X,a}$ yields
$$
\begin{array}{rcl}
\kappa_n \mu_{X,a}(\varphi_n^E)
& =& \displaystyle{\int_\Omega \mu_{X,a}^xK(x-y)  \varphi_n(y)dy}\\
& =& \displaystyle{\int_\Omega f_{X,a}(y)  \varphi_n(y)dy}\\
& =& (R^\Omega f_{X,a},\varphi_n)_{L_2(\Omega)}\\
&=& \kappa_n (f_{X,a},\varphi_n^E)_{K} %\calh(K,\R^d)}
\end{array}
$$
using the restriction map $R^\Omega$. 
Since all parts are continuous on ${\calh_K}$,
this generalizes to
\bql{eqomegaconn}
(R^\Omega f,\varphi_n)_{L_2(\Omega)}=\kappa_n (f,\varphi_n^E)_K %{\calh(K,\R^d)}
\fa f\in{\calh(K,\R^d)}
\eq
and in particular 
$$
\delta_{jk}=\kappa_k (\varphi_j^E,\varphi_k^E)_K,
%{\calh(K,\R^d)},
\;j,k\geq 0
$$
proving that the $\calh(\Omega,K_\kappa)$-orthogonality of the $\varphi_n$
carries over to the same orthogonality of the $\varphi_n^E$
in $\calh_K$, though the spaces and norms are defined differently.
Another consequence of \eref{eqomegaconn} combined with Lemma
 \RSref{lemStaLocOrth} is 
\begin{lemma}\RSlabel{lemHKOsame}
The subspace $\calh_K(\Omega)$ is the $\calh_K$-closure of the span
of the $\varphi_n^E$. \qed
\end{lemma}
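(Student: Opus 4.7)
The plan is to establish the two inclusions separately. Let $V$ denote the $\calh_K$-closure of $\spn\{\varphi_n^E\}_{n\ge 0}$. The inclusion $V\subseteq \calh_K(\Omega)$ is essentially already in hand: the discussion preceding the lemma shows that each $\varphi_n^E$ lies in $\calh_K(\Omega)\cap \calh_{K*K}$, and since $\calh_K(\Omega)$ is a closed subspace of $\calh_K$ (by its definition in \eref{eqZKO} as the orthogonal complement of $Z_K(\Omega)$), taking $\calh_K$-closure of spans stays inside it.

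For the reverse inclusion $\calh_K(\Omega)\subseteq V$, I would argue by orthogonality. Suppose $f\in \calh_K(\Omega)$ satisfies $(f,\varphi_n^E)_K=0$ for every $n\ge 0$. Applying the intertwining identity \eref{eqomegaconn} gives
$$
(R^\Omega f,\varphi_n)_{L_2(\Omega)}=\kappa_n(f,\varphi_n^E)_K=0 \fa n\ge 0.
$$
Since the kernel is (strictly) positive definite on $\R^d$, Mercer's theorem on the bounded domain $\Omega$ produces a complete $L_2(\Omega)$-orthonormal system $\{\varphi_n\}$ with strictly positive eigenvalues $\kappa_n>0$, so the vanishing of all Fourier coefficients forces $R^\Omega f=0$ in $L_2(\Omega)$. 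Rewriting $R^\Omega=E_\Omega^0 E_0^K$, this means $f\in \ker(E_\Omega^0E_0^K)=Z_K(\Omega)$. But $f\in\calh_K(\Omega)=Z_K(\Omega)^\perp$, so $f$ is orthogonal to itself and hence $f=0$. Thus $V^\perp\cap \calh_K(\Omega)=\{0\}$, and combined with the first inclusion this yields $\calh_K(\Omega)=V$.

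The only step requiring care is the completeness of $\{\varphi_n\}$ in $L_2(\Omega)$, which I would expect to be the main obstacle to formalise cleanly. The standard form of Mercer's theorem for a continuous, symmetric, strictly positive definite kernel on a bounded domain gives both $L_2(\Omega)$-orthonormality and completeness of the eigenfunctions associated with the compact, self-adjoint, positive integral operator $I^\Omega$ of \eref{eqIntOp}: strict positive definiteness of $K$ rules out a nontrivial kernel for $I^\Omega$, and the spectral theorem then delivers an orthonormal basis of eigenfunctions. Everything else in the argument is essentially book-keeping using \eref{eqomegaconn} and the orthogonal decomposition $\calh_K=Z_K(\Omega)\oplus\calh_K(\Omega)$ from \eref{eqZKO}.
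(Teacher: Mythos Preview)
Your proof is correct and follows essentially the same route the paper indicates: the inclusion $\varphi_n^E\in\calh_K(\Omega)$ is taken from the preceding discussion, and the reverse inclusion is exactly the combination of \eref{eqomegaconn} with the orthogonal decomposition $\calh_K=Z_K(\Omega)\oplus\calh_K(\Omega)$ underlying Lemma~\RSref{lemStaLocOrth}. Your added remark on $L_2(\Omega)$-completeness of the Mercer eigenfunctions makes explicit what the paper uses tacitly via \eref{eqfrep}.
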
 
The extension via the eigensystems generalizes \eref{eqfrep} to 
\bql{eqfE}
f^E(x):=\displaystyle{\sum_{n=0}^\infty (f,\varphi_n)_{L_2(\Omega)} \varphi^E_n(x)  }
\fa x\in \R^d.
\eq
\begin{lemma}\RSlabel{lemExt}
The extension map $f\mapsto f^E$ is isometric as a map from
$\calh(\Omega,K_\kappa)$ to ${\calh_K(\Omega)}$. \qed
\end{lemma}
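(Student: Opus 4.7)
The plan is to exploit the two parallel orthogonality relations computed just above the lemma: in $\calh(\Omega,K_\kappa)$ the Mercer functions satisfy $(\varphi_j,\varphi_k)_{\Omega,K_\kappa}=\delta_{jk}/\kappa_k$, while their eigensystem extensions satisfy the identical relation $(\varphi_j^E,\varphi_k^E)_K=\delta_{jk}/\kappa_k$ in $\calh_K$. Since the Gram matrices of the two systems agree, the map $\varphi_n\mapsto \varphi_n^E$ preserves norms on finite linear combinations, and the series definition \eref{eqfE} of $f^E$ is just the continuous extension of this map to all of $\calh(\Omega,K_\kappa)$.

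Concretely, I would take $f\in\calh(\Omega,K_\kappa)$, set $c_n:=(f,\varphi_n)_{L_2(\Omega)}$, and observe that membership in $\calh(\Omega,K_\kappa)$ is exactly the condition $\sum_n c_n^2/\kappa_n = \|f\|_{\Omega,K_\kappa}^2<\infty$. Then I would look at the partial sums $S_N:=\sum_{n=0}^N c_n\varphi_n^E$ in $\calh_K$; the orthogonality of the $\varphi_n^E$ gives $\|S_N-S_M\|_K^2=\sum_{n=M+1}^N c_n^2/\kappa_n$ for $M<N$, which is the tail of a convergent series. Hence $(S_N)$ is Cauchy in $\calh_K$ and converges to a limit which must coincide with the function $f^E$ defined pointwise by \eref{eqfE}, using continuity of point evaluation on $\calh_K$. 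Passing to the limit in $\|S_N\|_K^2=\sum_{n=0}^N c_n^2/\kappa_n$ yields $\|f^E\|_K=\|f\|_{\Omega,K_\kappa}$, which is the desired isometry.

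Finally, I would verify that $f^E$ actually lies in $\calh_K(\Omega)$ so that the map has the claimed target. Each $\varphi_n^E$ belongs to $\calh_K(\Omega)\cap\calh_{K*K}$ by the argument just given above \eref{eqomegaconn}, so every partial sum $S_N$ lies in the closed subspace $\calh_K(\Omega)$, and therefore so does its $\calh_K$-limit $f^E$. This is consistent with Lemma \RSref{lemHKOsame}, which identifies $\calh_K(\Omega)$ with the $\calh_K$-closure of the span of the $\varphi_n^E$.

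The argument is essentially Parseval bookkeeping, so there is no deep obstacle. The one point that requires a little care is the fact that the expansion $f=\sum_n c_n\varphi_n$ converges only in $\calh(\Omega,K_\kappa)$, while $f^E=\sum_n c_n\varphi_n^E$ must converge in $\calh_K$; the shared Gram matrix of $\{\varphi_n\}$ and $\{\varphi_n^E\}$ is precisely what turns one convergence into the other, and also what guarantees that the two inner-product computations yield the same number.
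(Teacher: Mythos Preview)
Your argument is correct and is exactly the Parseval bookkeeping the paper has in mind; the paper simply writes \qed\ because the matching orthogonality relations $(\varphi_j,\varphi_k)_{\Omega,K_\kappa}=\delta_{jk}/\kappa_k=(\varphi_j^E,\varphi_k^E)_K$ established just before the lemma make the isometry immediate. You have spelled out the details (Cauchy convergence of partial sums, identification of the limit with the pointwise series, and membership in $\calh_K(\Omega)$) that the paper leaves implicit.
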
 
It is now natural to define a kernel
$$
K^E(x,y):=\displaystyle{\sum_{n=0}^\infty 
\kappa_n\varphi_n^E(x)\varphi_n^E(y)   } 
$$
that coincides with $K$ on $\Omega\times \Omega$. If we insert it into
\eref{eqomegaconn}, we get
$$
\begin{array}{rcl}
\kappa_n (K^E(x,y),\varphi_n^E)_K % {\calh(K,\R^d)}
&=&
((R^\Omega)^y K^E(x,y),\varphi_n)_{L_2(\Omega)}\\
&=&
\displaystyle{\left(\sum_{k=0}^\infty \kappa_k\varphi_k^E(x)\varphi_k,
\varphi_n\right)_{L_2(\Omega)}   }\\ 
&=&
\kappa_n\varphi_n^E(x)
\end{array}
$$
proving that $K^E$ is reproducing on the span of the $\varphi^E$
in the inner product of ${\calh_K}$, i.e. on $\calh_K(\Omega)$, 
and the actions of $K$ and $K^E$ on that subspace are the same. 
\begin{theorem}\RSlabel{theSumMerc}
The localized spaces $\calh_K(\Omega)$ and $\calh(\Omega,K_\kappa)$ 
can be identified, and the extensions to $\R^d$ via eigenfunctions
and by norm-minimality coincide. Working with a Mercer expansion on $\Omega$ 
means working in the space $\calh_K(\Omega)$ that shows superconvergence
if $\calh_{K*K}$-smoothness is added.   
\end{theorem}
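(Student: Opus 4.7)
The plan is to assemble the three assertions from Lemmas \RSref{lemHKOsame} and \RSref{lemExt}, together with the uniqueness of the $\calh_K$-norm-minimal extension from Lemma \RSref{lemStaLocOrth}. The identification of the two localized spaces and the coincidence of the two extension procedures are really two views of the same fact: for a function represented by a Mercer series on $\Omega$, the eigenfunction extension automatically lies in $\calh_K(\Omega)$ and is therefore norm-minimal in $\calh_K$.

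First I would establish the isomorphism. By Lemma \RSref{lemExt} the assignment $f\mapsto f^E$ is an isometry from $\calh(\Omega,K_\kappa)$ into $\calh_K(\Omega)$. The span of the $\varphi_n$ is dense in $\calh(\Omega,K_\kappa)$ by the very construction \eref{eqfrep}, and its image under extension, the span of the $\varphi_n^E$, is dense in $\calh_K(\Omega)$ by Lemma \RSref{lemHKOsame}. An isometry between Hilbert spaces with dense domain and dense image extends uniquely to an isometric isomorphism, which is the first assertion.

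Next I would verify that the two extension procedures coincide. For $f\in\calh(\Omega,K_\kappa)$, the series \eref{eqfE} converges in $\calh_K(\Omega)$ by the isometry just established, so $f^E\in\calh_K(\Omega)$; restricting to $\Omega$ and using $\varphi_n^E|_\Omega=\varphi_n$ together with \eref{eqfrep} gives $f^E|_\Omega=f$. By Lemma \RSref{lemStaLocOrth}, every element of $\calh_K(\Omega)$ is the unique $\calh_K$-norm-minimal extension of its restriction to $\Omega$, so $f^E$ must agree with the norm-minimal extension. The superconvergence claim then follows at once: once the identification $\calh(\Omega,K_\kappa)\cong\calh_K(\Omega)$ is in place, any $f$ in this space with additional smoothness $f\in\calh_{K*K}$ satisfies \eref{eqaddass2}, so Theorem \RSref{theEqui} produces both \eref{eqconvolgen} and \eref{eqlocalizgen}, and Theorem \RSref{theSupConvLoc} supplies the bound.

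The main obstacle, such as it is, is keeping track of which inner product is being used where: one must confirm that membership in $\calh(\Omega,K_\kappa)$ (controlled by $L_2(\Omega)$-coefficients against the $\varphi_n$) really corresponds to membership in $\calh_K(\Omega)$ (controlled by $\calh_K$-coefficients against the $\varphi_n^E$). This reduces to the identity $(\varphi_j^E,\varphi_k^E)_K=\delta_{jk}/\kappa_k$ already noted in the discussion preceding Lemma \RSref{lemHKOsame}; once that identity is in hand, both spaces carry the same $\sum_n|c_n|^2/\kappa_n$ norm on coefficient sequences and the identification is canonical. Beyond this bookkeeping, no further analysis is needed.
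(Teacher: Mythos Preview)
Your proposal is correct and follows the paper's own route: the theorem is stated there as a summary of the preceding discussion (the orthogonality identity $(\varphi_j^E,\varphi_k^E)_K=\delta_{jk}/\kappa_k$, Lemmas \RSref{lemHKOsame} and \RSref{lemExt}, and the norm-minimal extension characterization of Lemma \RSref{lemStaLocOrth}), with no separate proof given. Your assembly of these ingredients, together with the appeal to Theorem \RSref{theEqui} and Theorem \RSref{theSupConvLoc} for the superconvergence clause, is exactly what the paper intends.
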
 
A similar viewpoint connected to Mercer expansions is that
superconvergence occurs whenever there is a {\em range condition}
in the sense of Integral Equations,  i.e. the given function $f$
is in the range of the integral operator \eref{eqIntOp}.
%%%%
%****************************************************************
\section{Numerical Examples}\RSlabel{SecExa}
The reproducing kernels of $W_2^1(\R^1)$ and $W_2^2(\R^1)$ are
$$
\begin{array}{rcl}
K_1(r)&:=&\sqrt{\frac{\pi}{2}}\exp(-r),\\
K_2(r)&:=&\sqrt{\frac{\pi}{2}}\exp(-r)(1+r),\\
\end{array}
$$
respectively, and we shall mainly 
work with $K:=K_2$ in $\calh_K=W_2^2(\R^1)$,
continuing Example \RSref{exaW21} from Section \RSref{SecSob}.
We use the function 
$f:=K_2*\chi_{[-1,+1]}$, which can easily be calculated explicitly as
$$
\begin{array}{rcl}
f(x)=\left\{\begin{array}{lclcc}
e^{+x-1}(x-3)&+& e^{+x+1}(1-x) & & x\leq -1\\
e^{+x-1}(x-3) &-&e^{-1-x}(x+3)+4 & & -1\leq x\leq +1\\
e^{-x+1}(1+x)&-&e^{-1-x}(x+3) & & 1\leq x\\
\end{array}     \right\}
\end{array} 
$$
with the correct extension to $\R$ by solutions
of $L_4f=(f-f'')-(f-f'')''=0$ on either side, together with the
needed decay at infinity.
\biglf
The convolution domain $[-1,+1]$
is kept fixed, but then we vary the domain $\Omega=[-C,+C]$ 
that we work on. 
Note that reasonable solutions will try to come up  
with coefficients that are 
a discretization of the characteristic function  $\chi_{[-1,+1]}$,
but this is not directly possible for $C<1$. 
\biglf
In each domain chosen, we took equidistant interpolation points,
and for estimating $L_2$ norms, we calculated 
a root-mean-square error on a sufficiently fine subset.
Working in $W_2^2(\R^1)$ with the kernel $K_2$ would usually
give a global $L_2$ interpolation error of order $h^2$ due to standard
results, see e.g. \RScite{wendland:2005-1}, and this is the order arising in
the standard sampling inequality  
that is doubled
by Theorem \RSref{theSupConvLoc}. 
Thus we expect a convergence rate of $h^4$
in the superconvergence situation, while the normal rate is $h^2$.  
\biglf
If we use $C=1.2$ and interpolate $f_2$ in $\calh(K,\R^d)$ there.
we are in the superconvergence 
case, because $f_2$ is a convolution with $K$ of a function supported
in $[-1,+1]\subset\Omega$. The observed rates are 
around $4$ in $[-1.2,+1.2]$ and in the ``interior'' domain $[-0.8,+0.8]$,
see Figure \RSref{figC1p2}. Up to a Gibbs phenomenon,
the interpolant recovers $\chi_{[-1,+1]}$, and this is also visible when looking
at the error.  

\begin{figure}[hbt] 
\begin{center}
\includegraphics[width=6cm,height=6cm]{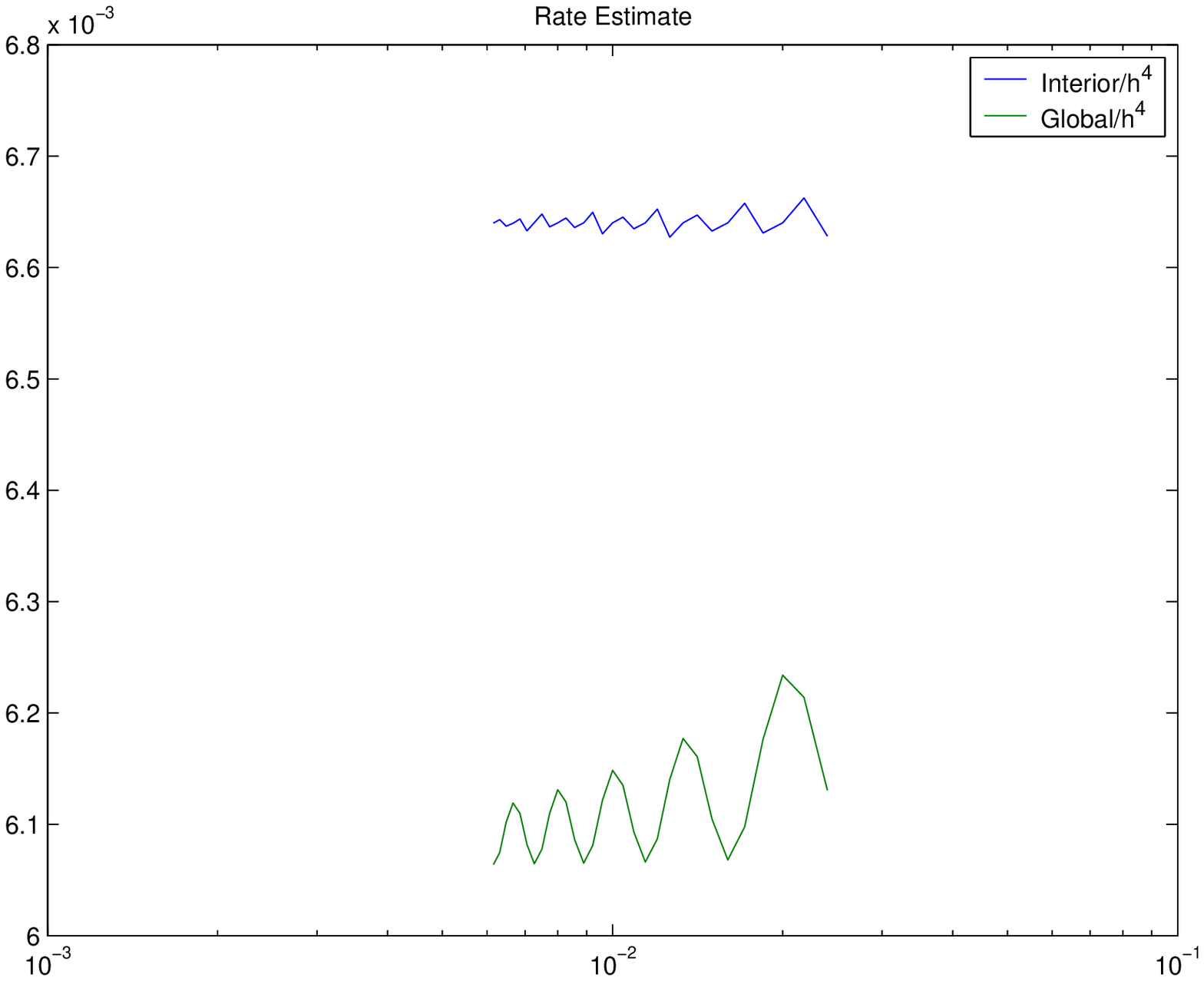}
\includegraphics[width=6cm,height=6cm]{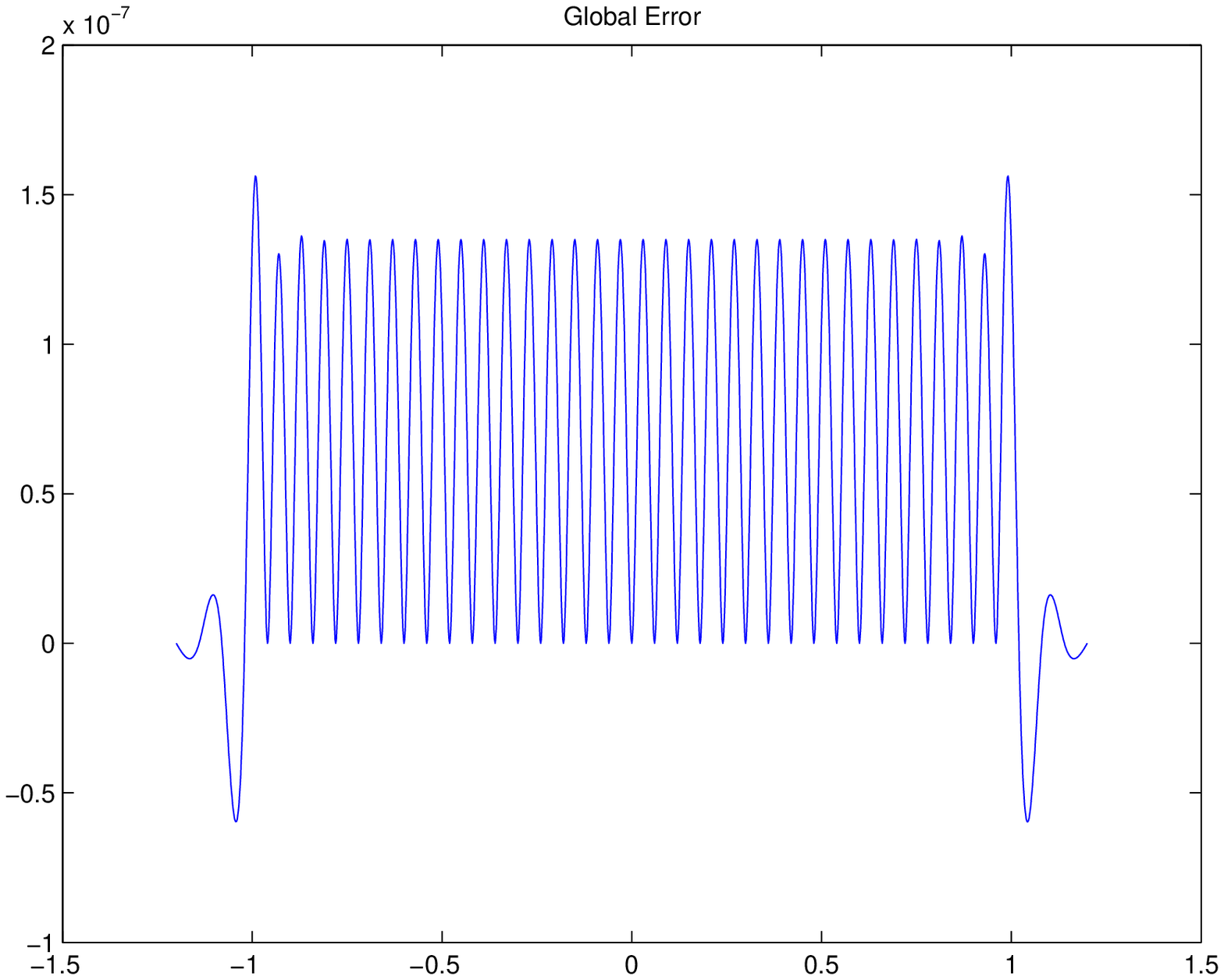}
\end{center}
\caption{Superconvergence case in $[-1.2,+1.2]$, rate estimates
(left) and error function for 41 points (right)
\RSlabel{figC1p2}}
\end{figure}

For $C=0.8$, we still have enough smoothness for
superconvergence, but the localization condition \eref{eqlocalizgen} fails.
The standard 
expected global convergence rate is 2, but in the ``interior''
$[-0.6,+0.6]$ we still see superconvergence of order 4 
in Figure \RSref{figC0p8}. The global error is attained at the boundary.
\biglf
%The same example also illustrates Corollary \RSref{corTheHKBE} when
%using $K=K_2$ there. 
 
\begin{figure}[hbt] 
\begin{center}
\includegraphics[width=6cm,height=6cm]{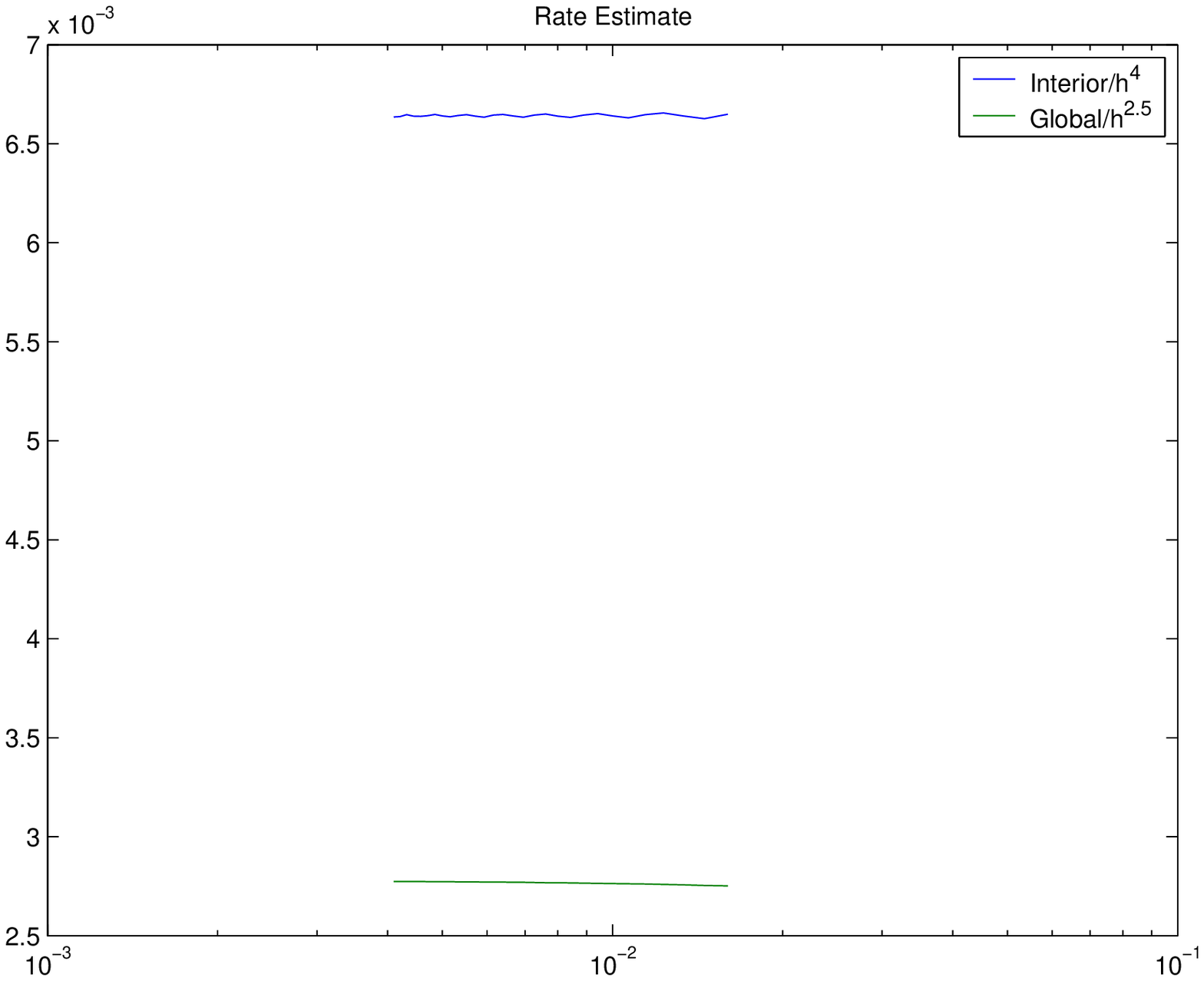}
\includegraphics[width=6cm,height=6cm]{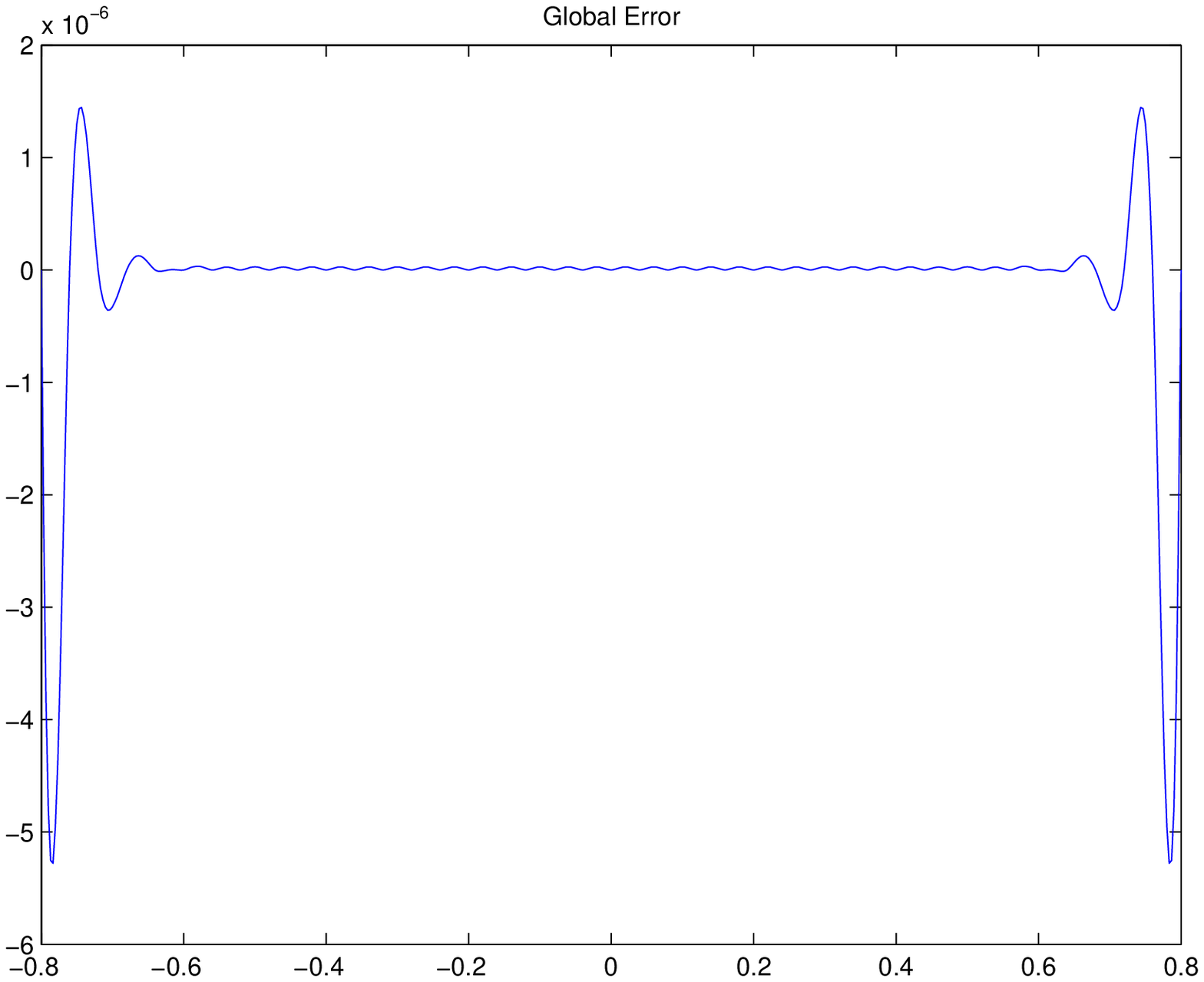}
\end{center}
\caption{Convergence in $\Omega:=[-0.8,+0.8]$ and ``interior''
$[-0.6,+0.6]$, rate estimates
(left) and error function for 41 points (right)
\RSlabel{figC0p8}}
\end{figure}

Surprisingly, the global rate is 2.5 instead of 2, and this is confirmed for
many other cases, even various ones with just $W_2^2(\R^1)$ smoothness.
This is another instance of superconvergence, and it needs further work.
Experimentally, it can be observed that the
norms $\|f-s_{f,X,K}\|_K$ often go to zero like $1/\sqrt{|X|}$, 
possibly accounting 
for the extra
$\sqrt{h}$ contribution to the usual convergence rate $2$ that is obtained
when assuming that the norms are only bounded by $\|f\|_K$.
\biglf
The standard error analysis of kernel-based interpolation 
of functions $f\in \calh_K(\Omega)$ using a kernel $K$ and a set $X$ of nodes
ignores the fact that the Hilbert space error $\|f-s_{f,X,K}\|_K$
decreases to zero when $|X|$ gets large and finally ``fills'' the domain.
It seems to be a long-standing problem to turn this obvious
fact into a convergence rate that is better than the usual
one given by sampling inequalities that just use the upper bound
$\|f\|_K$ for that error. 
%****************************************************************
%\section{Outlook}\RSlabel{SecOut} 
%%%%%%%%%%%%%%%%%%%%%%%%%%%%%%%%%%%
\bibliographystyle{plain}
%\bibliography{RSbib}

\end{document}